\documentclass[preprint,10pt]{elsarticle}
\biboptions{numbers,sort&compress}
\usepackage{amsfonts,psfrag,amsmath,mathrsfs,amsthm,color,empheq}
\usepackage{color,amssymb,bm}
\usepackage{epsfig,soul}
\usepackage{hyperref}
\hypersetup{
	colorlinks   = true, 
	urlcolor     = blue, 
	linkcolor    = blue, 
	citecolor   = red 
}
\usepackage{multirow}
 	{%
 	\end{oldthebibliography}%
 }

\usepackage{amsmath,mathtools} 
\usepackage{amsthm,amssymb,bbm}
 \usepackage{graphicx,float}
 \usepackage{newtxmath}
\usepackage{helvet}

\usepackage[T1]{fontenc}
\DeclareMathAlphabet{\mathcal}{OMS}{cmsy}{m}{n}


\usepackage{setspace}
\onehalfspacing
\AtBeginDocument{%
	\addtolength\abovedisplayskip{-0.2\baselineskip}%
	\addtolength\belowdisplayskip{-0.2\baselineskip}%
}
 \usepackage[margin=1in]{geometry}

\newcommand{\tn}[1]{\quad \textnormal{#1}\quad }

\newcommand{\IR}{{\mathbb{R}}}

\newcommand{\CO}{{\mathcal{O}}}

\newcommand{\T}{{\intercal}}
\newcommand{\diag}{{\rm{diag}}}

\newcommand{\bmt}{\left[ \begin{array}{ccccccccc}}
\newcommand{\emt}{\end{array}\right]}
\newcommand{\bean}{\begin{eqnarray*}}
\newcommand{\eean}{\end{eqnarray*}}
\newcommand{\bea}{\begin{eqnarray}}
\newcommand{\eea}{\end{eqnarray}}
\newcommand{\eq}{\begin{equation}\begin{array}{lllllllll}}
\newcommand{\ee}{\end{array}\end{equation}}
\newcommand{\eqn}{\begin{equation*}\begin{array}{lllllllll}}
\newcommand{\een}{\end{array}\end{equation*}}
\newtheorem{theorem}{Theorem}[section]
\newtheorem{lemma}{Lemma}[section]
\newtheorem{remark}{Remark}[section]

\allowdisplaybreaks

\graphicspath{{./Figures/}}

\journal{Inverse Problems}

\begin{document}

	\begin{frontmatter}
	
	\title{A Direct Parallel-in-Time Quasi-Boundary Value Method for\\Inverse Space-Dependent Source Problems}
	
	 	\author{Yi Jiang}
	 \ead{yjianaa@siue.edu}  	
	 \address{Department of Mathematics and Statistics, Southern Illinois University Edwardsville, Edwardsville, IL 62026, USA.}

	\author{Jun Liu\corref{mycorrespondingauthor}}
	\cortext[mycorrespondingauthor]{Corresponding author}
	\ead{juliu@siue.edu}  	
	\address{Department of Mathematics and Statistics, Southern Illinois University Edwardsville, Edwardsville, IL 62026, USA.}

	\author{Xiang-Sheng Wang}
	\ead{xswang@louisiana.edu}  	
	\address{Department of Mathematics, University of Louisiana at Lafayette, Lafayette, LA 70503, USA.}
	
	\begin{abstract}
Inverse source problems arise often in real-world applications, such as localizing unknown groundwater contaminant sources.
Being different from Tikhonov regularization, the quasi-boundary value method has been proposed and analyzed as an effective way for regularizing such inverse source problems, which was shown to achieve an optimal order convergence rate under suitable assumptions.
However,  fast direct or iterative solvers for the resulting all-at-once large-scale linear systems have been rarely studied in the literature. In this work, we first proposed and analyzed a modified quasi-boundary value method, and then developed a  diagonalization-based parallel-in-time (PinT) direct solver, which can achieve a dramatic speedup in CPU times when compared with MATLAB's sparse direct solver.	
In particular, the time-discretization matrix $B$ is shown to be diagonalizable, and the condition number of its eigenvector matrix $V$ is proven to exhibit quadratic growth, which guarantees the roundoff errors due to diagonalization is well controlled.
Several 1D and 2D examples are presented to demonstrate the very promising computational efficiency of our proposed method,
where the CPU times in 2D cases can be speedup by three orders of magnitude.
	\end{abstract}
	
	\begin{keyword}
	ill-posed \sep inverse source problem \sep quasi-boundary value method\sep regularization \sep diagonalization \sep parallel-in-time\sep condition number
	\end{keyword}
	
\end{frontmatter}

\section{Introduction}
\label{SecProblem}
 Let $T>0$ and $\Omega\subset \IR^d  (d=1,2,3)$ be an open and bounded domain with a piecewise smooth boundary $\partial\Omega$.
 We consider the inverse source problem (ISP) \cite{SAVATEEV_1995,Cannon_1998,dou2009identifying} of reconstructing the unknown space-dependent source term $f\in L^2(\Omega)$ from the final time condition $g=u(\cdot,T)  \in H_0^1(\Omega)$, according to a non-homogeneous heat equation
 \eq \label{state}
 \left\{\begin{array}{ll}
 	u_t -\Delta u =f,\ \qquad &\tn{in} \Omega\times (0,T),  \\
 	u(\cdot,t)=0, &\tn{on} \partial\Omega\times (0,T), \\
 	u(\cdot,0)=\phi , &\tn{in} \Omega, \\
 	u(\cdot,T)=g , &\tn{in} \Omega,
 \end{array}\right.
 \ee
 where $\phi\in H_0^1(\Omega)$ is a given initial condition.
 In practice, the final condition $g$ is unknown exactly and it is available as a noisy measurement $g_\delta\in  L^2(\Omega)$, which is assumed to  satisfy $\|g-g_\delta\|_{2}\le \delta$ with a given noise level $\delta>0$.
  This leads to an  ill-posed inverse problem that requires effective regularization techniques for stable numerical approximations \cite{engl2000,kabanikhin2011inverse,kirsch2021introduction,lesnic2021inverse}.

Many research has been dedicated to the inverse source problem since 1970s, where the desired source term is usually assumed to have \textit{a priori} form. For $f$ that depends on the state function $u$, the problem was investigated in \cite{ Cannon_1998, fatullayev02, fatullayev04}.
For $f$ that depends on space or time variable only, many regularization methods have been developed such as Fourier method \cite{dou2009optimal}, quasi-reversibility method \cite{dou2009identifying}, quasi-boundary value method \cite{Yang_2013} and simplified Tikhonov regularization method \cite{yang2010simplified}. In particular, in \cite{dou2009identifying, yang2010simplified, Yang_2013}, the original problem is perturbed by a regularization parameter and the unknown source term is expressed in the form of a series expansion tailored by a regularizing filter. Following an appropriate convergence analysis, the regularization parameter in these work is determined to balance the approximation accuracy and the stability of the regularized problem. The Fourier method in \cite{dou2009optimal} solves the problem in the frequency domain and alleviates the ill-posedness of the problem by cutting off the high frequency components in the source solution, where the cut-off frequency is also chosen based on a convergence analysis. Such idea that truncates the terms contributing to the ill-posedness is also seen in \cite{yanfudou10}. In this work, a finite difference method is used to solve the inverse problem and the resulting linear system is solved by the singular value decomposition, where the small singular values are filtered out based on generalized cross-validation criterion \cite{golubheathwahba79}. There are some other numerical methods adopted in the research on inverse source problem, usually in conjunction with a classical regularization technique like Tikhonov method. For example, the boundary element method \cite{farcas2006boundary}, the method of fundamental solutions \cite{yan2008method, ahmadabadi2009method, yan2009meshless} and the finite element method \cite{wangzhangwu16}. Some iterative algorithms can be found in \cite{johansson2007variational, johansson2007determination, yangdehghanyuluo11, yangyuluodeng13}. For $f$ that is a function of both time and space variables but is additive or separable, we refer to \cite{yimurio04, tronglongalain05, trongquanalain06,  mafuzhang12}.





  For solving direct (or forward) evolutionary PDEs, many efficient parallelizable numerical algorithms  have been developed in the last few decades due to the advent of massively parallel computers. In addition to the achieved high parallelism in space,  a lot of recent advances in various parallel-in-time (PinT) algorithms for solving forward time-dependent PDE problems were reviewed in \cite{gander201550}.
  However, the application of such PinT algorithms to ill-posed inverse PDE problems were rarely investigated in the literature, except in a short paper \cite{daoud2007stability} about the \textit{parareal} algorithm for a different parabolic inverse problem and another earlier paper \cite{lee2006parallel} based on numerical (inverse) Laplace transform techniques in time.
  One obvious difficulty is how to address the underlying regularization treatment in the framework of PinT algorithms, which seems to be highly dependent on the regularized problem structure and discretization schemes.
  Inspired by several recent works \cite{MR08,MPW18,GH19,WuLiu2020,LW20,LiuWang2022} on diagonalization-based PinT algorithms,  we propose to redesign the existing quasi-boundary value methods in a structured manner such that the diagonalization-based PinT direct solver can be successfully employed. Such a PinT direct solver can greatly speed up the quasi-boundary value methods while achieving a comparable reconstruction accuracy. Recently, such an interesting approach of integrating  PinT direct solver with regularization was  applied to backward heat conduction problems \cite{Seidman1996,tautenhahn1996optimal,liu2019quasi}, where a  block $\omega$-circulant structure was exploited for developing a fast FFT-based direct PinT solver \cite{liu2021fast}.

 Besides the above mentioned ISPs for PDEs based on ordinary integer-order derivatives, there are several recent works on solving ISPs in the framework of time-fractional PDEs, to name just a few \cite{jin2015tutorial,Wei_2014a,Wei_2014b,yang2015inverse,nguyen2016regularized,wei2016inverse,ali2020inverse}.
 The majority of these contributions focuses on the convergence properties of the proposed regularization techniques without discussing fast algorithms for their numerical implementations.
 For solving related time-fractional diffusion inverse source problems, the authors in \cite{Ke2020} proposed a fast structured preconditioner based on approximate Schur complement and block $\omega$-circulant matrix. However, as an iterative solver, the underlying convergence analysis  for preconditioned GMRES in \cite{Ke2020} is a daunting task (due to nonsymmetric systems) and the preconditioner can not be easily parallelized in time. Our proposed direct PinT solver does not have such limitations for its practical use.
 Due to very different discretization schemes in time, we mention that our proposed direct PinT solver may not be directly applicable to such ISPs with  time-fractional PDEs. Nevertheless, we believe the similar algorithm can be used upon modification.

  In this paper we designed and analyzed a new parameterized quasi-boundary value method (PQBVM) for regularizing the ISPs, where a well-conditioned diagonalization-based PinT direct solver is developed for its efficient numerical implementation. The major goal is to improve the overall computational efficiency in terms of CPU times, without obviously degrading the convergence rates in comparison with existing methods.
  As theoretical contributions,  the condition number of the diagonalization of the time discretization matrix is rigorously estimated and the convergence rate  of the new PQBVM is also shown with suitable a  priori choice of the regularization parameter. For 2D problems with a small $64^3$ mesh (see results in Table \ref{T4A}), our direct PinT  solver can drastically speed up the CPU times of the standard QBVM based on sparse direct solver from  over 2 mins to about 0.04 second (on a desktop PC).

  	The rest of this paper is organized as follows.
  	In the next Section 2, we propose a new parameterized QBVM based on finite difference discretization and present a diagonalization-based direct PinT solver based on the derived system structure. Section 3 is devoted to justifying when the time discretization matrix $B=VDV^{-1}$ is indeed diagonalizable and, more importantly, estimating the growth rate of the condition number of its eigenvector matrix with a special choice of the free parameter.
  	The convergence analysis with suitable choice of the regularization parameter is given in Section 4.
  	Several numerical examples are presented to illustrate the high efficiency of the proposed algorithm in Section 5. Finally, some conclusions are made in Section 6.
 \section{A new quasi-boundary value method and its PinT implementation}
 \label{QBVM}
 The QBVM in \cite{Yang_2013} for regularizing (\ref{state})  solves the following well-posed
  regularized problem
   \eq \label{state_qbvm}
  \left\{\begin{array}{ll}
  	u_t -\Delta u =f,\ \qquad &\tn{in} \Omega\times (0,T),  \\
  	u(\cdot,t)=0, &\tn{on} \partial\Omega\times (0,T), \\
  	u(\cdot,0)=\phi , &\tn{in} \Omega, \\
  	u(\cdot,T){+\beta f(\cdot)}=g_\delta , &\tn{in} \Omega,
  \end{array}\right.
  \ee
  where $\beta>0$ is a regularization parameter to be chosen based on the noise level $\delta>0$.
  Compared with the Tikhonov regularization \cite{yang2010simplified} of minimizing a regularized functional $ \|Kf-g_\delta\|_2^2+ \gamma\|f\|_2^2$ with $K$ being a compact solution operator and $\gamma$ being a regularization parameter, the QBVM provides a better control of the system structure after discretization. In particular, the QBVM does not need  to explicitly construct  $K$ or its adjoint $K^*$ and use any eigenfunctions of the spatial differential operator.

Let $I_h\in \IR^{m\times m}$ be an identity matrix.
 With a center finite difference scheme in space (denotes $\Delta_h\in\IR^{m\times m} $ by the discrete Laplacian matrix with a uniform step size $h>0$) and a backward Euler scheme in time (with a uniform time step size $\tau=T/n$),  the full discretization of (\ref{state_qbvm}) reads (with the initial condition $u^0=\phi_h$ and $u^j\approx u(\cdot,j \tau)$ over all spatial grids)
 \begin{align} \label{state_qbvm_new_h}
 	\left\{\begin{array}{ll}
 		{(u^j-u^{j-1})}/{\tau} -\Delta_h u^j -f_h=0,\ j=1,2,\cdots, n, \\
 		u^n+{\beta f_h}=g_{\delta,h},
 	\end{array}\right.
 \end{align}
 which can be reformulated into a nonsymmetric sparse linear system
 \begin{align} \label{linsysQBVP}
 	\widehat A_h {\bm u}_h=  \widehat {\bm b}_h,
 \end{align}
 where 
 \begin{align*}
 	\widehat A_h&=\bmt
 	{\beta I_h }& 0&0 &\cdots &0 &I_h\\
 	-{I_h}  & {I_h}/{\tau} -\Delta_h &0 & \cdots &0 &0\\
 	-{I_h}&-{I_h}/{\tau} &  {I_h}/{\tau}-\Delta_h & 0 & \cdots &0\\
 	\vdots&0&\ddots &\ddots &\ddots &0\\
 	-{I_h}& 0&\cdots &-{I_h}/{\tau} & {I_h}/{\tau}-\Delta_h & 0\\
 	-{I_h}&0&\cdots & 0 & {-I_h}/{\tau} & {I_h}/{\tau}-\Delta_h
 	\emt,
 	{\bm u}_h&=\bmt f_h\\ u^1 \\u^2 \\ \vdots \\ u^{n-1}\\ u^n \emt,
 	\widehat {\bm b}_h=\bmt g_{\delta,h}\\\phi_h/\tau\\ 0  \\ \vdots \\ 0\\0 \emt.
 \end{align*}
Clearly, the $(1,1)$ block $\beta I_h$ is different from the other diagonal blocks, which prevents a Kronecker product formulation of  $\widehat A_h$ desired in PinT algorithm as shown in the our new parameterized QBVM.

 \subsection{A new quasi-boundary value method based on finite difference scheme}
 To get a better structured linear system that allows a fast direct PinT solver upon finite difference discretization, we propose the following new parameterized QBVM (PQBVM)
  \eq \label{state_qbvm_new}
 \left\{\begin{array}{ll}
 	u_t -\Delta u =f,\ \qquad &\tn{in} \Omega\times (0,T),  \\
 	u(\cdot,t)=0, &\tn{on} \partial\Omega\times (0,T), \\
 	u(\cdot,0)=\phi , &\tn{in} \Omega, \\
 	u(\cdot,T)+{\beta(\alpha f(\cdot)-\Delta f(\cdot)})=g_\delta , &\tn{in} \Omega,
 \end{array}\right.
 \ee
 where $\alpha\ge 0$ is a free design parameter to control the condition number of the subsequent direct PinT solver. In general with $\alpha\ne 0$, we expect the above new PQBVM to have a similar convergence rate as the standard QBVM in \cite{Yang_2013} due to the shared term $f$. We highlight that the special choice of $\alpha=0$ in fact leads to the known modified QBVM (MQBVM) established in \cite{Wei_2014b} within the framework of time-fractional  diffusion equation.
 However, the authors in \cite{Wei_2014b} focused on studying the improved convergence rates of MQBVM, without discussing fast algorithms for solving the regularized linear systems. In this paper we propose the above PQBVM mainly from the perspective of designing regularized linear systems with better structures that are suitable for constructing direct PinT algorithms, while at the same time retaining the convergence rates of QBVM.
 We emphasize that $\alpha\ge 0$ should not be treated as another regularization parameter like $\beta>0$ and it will be chosen purely for facilitating the development of fast direct PinT system solvers.

 With the same center finite difference scheme in space and  backward Euler scheme in time as used in the above discretization (\ref{state_qbvm_new_h}),  the full discretization of (\ref{state_qbvm_new}) leads to
\begin{align} \label{state_pqbvm_new_h}
 \left\{\begin{array}{ll}
  {(u^j-u^{j-1})}/{\tau} -\Delta_h u^j -f_h=0,\ j=1,2,\cdots, n, \\
 	u^n+{\beta(\alpha f_h-\Delta_h f_h})=g_{\delta,h},
 \end{array}\right.
\end{align}
which, after dividing the last equation by $\beta$, can be reformulated into a nonsymmetric  linear system
\begin{align} \label{linsysQBVP}
	A_h {\bm u}_h=  {\bm b}_h,
\end{align}
where
\begin{align*}
	A_h&=\bmt
	{\alpha I_h-\Delta_h}& 0&0 &\cdots &0 &I_h/\beta\\
	-{I_h}  & {I_h}/{\tau} -\Delta_h &0 & \cdots &0 &0\\
	-{I_h}&-{I_h}/{\tau} &  {I_h}/{\tau}-\Delta_h & 0 & \cdots &0\\
	\vdots&0&\ddots &\ddots &\ddots &0\\
	-{I_h}& 0&\cdots &-{I_h}/{\tau} & {I_h}/{\tau}-\Delta_h & 0\\
	-{I_h}&0&\cdots & 0 & {-I_h}/{\tau} & {I_h}/{\tau}-\Delta_h
	\emt,
	{\bm b}_h=\bmt g_{\delta,h}/\beta\\\phi_h/\tau\\ 0  \\ \vdots \\ 0\\0 \emt.
\end{align*}
We can now rewrite the block-structured matrix $A_h$ in  (\ref{linsysQBVP}) into Kronecker product form
\begin{align} \label{Akron1}
 A_h= B \otimes I_h - I_t\otimes \Delta_h
\end{align}
where $I_t\in\IR^{(n+1)\times (n+1)}$ denotes an identity matrix and the time discretization matrix $B$ is given by
\begin{align}
	B &=\bmt
	\alpha& 0&0 &\cdots &0 &1/\beta\\
	-1 &  1/\tau &0 & \cdots &0 &0\\
	-1&-1/\tau &  1/\tau & 0 & \cdots &0\\
	\vdots&0&\ddots &\ddots &\ddots &0\\
	-1& 0&\cdots &-1/\tau& 1/\tau& 0\\
	-1&0&\cdots & 0 & -1/\tau &1/\tau
	\emt \in\IR^{(n+1)\times (n+1)}.
\end{align}	
Such a Kronecker product reformulation (\ref{Akron1}) is crucial to develop our following fast direct PinT solver,
 {which requires the matrix $B$ to be diagonalizable.  Since $B$ is nonsymmetric and has a nontrivial structure, its diagonalizability is not straightforward and will be discussed separately in Section \ref{diagofB}.}
\subsection{A diagonalization-based direct PinT solver}
\label{PinT}
Suppose $B$ has a diagonalization $B=V D V^{-1}$, where
$D=\text{diag}(d_{1},\dots,d_{n+1})$ with $d_j$ being the $j$-th eigenvalue of $B$
and the $j$-th column of the invertible matrix $V$ gives the corresponding eigenvector.  Then we can factorize $A_h$ into
the product form
\[
A_h=(VDV^{-1} )\otimes I_h - I_t\otimes \Delta_h=\underbrace{(V\otimes I_h)}_{\tn{Step-(a)}}\underbrace{\left( D\otimes I_h-I_t\otimes \Delta_h \right)}_{\tn{Step-(b)}} \underbrace{(V^{-1}\otimes I_h)}_{\tn{Step-(c)}}.
\]
Hence,  let $Z=\texttt{mat}({\bm b}_h)\in\IR^{m\times (n+1)}$,  the solution ${\bm u}_h= A_h^{-1} {\bm b}_h$ can be computed via three steps:
\begin{equation}\label{3step}
	\begin{split}
		&\text{Step-(a)} ~~S_1=  Z V^{-\T}  ,\\
		&\text{Step-(b)} ~~S_{2}(:,j)=\left( { d_{j}}I_h - \Delta_h\right)^{-1} S_{1}(:,j),\quad ~j=1,2,\dots,n+1,\\
		&\text{Step-(c)} ~~{\bm u}_h=\texttt{vec}(S_2V^\T)  ,\\
	\end{split}
\end{equation}
where  $S_{1,2}(:,j)$ denotes the $j$-th column of $S_{1,2}$ and $V^\T$ defines the non-conjugate transpose of $V$.
Here we have used the efficient Kronecker product property $(C \otimes I_h)\texttt{vec}(X)=\texttt{vec}( XC^\T)$ for any compatible matrices $C$ and $X$.
Clearly, the $(n+1)$ fully independent complex-shifted linear systems in Step-(b) can be computed in parallel.
Notice that a different spatial discretization only affects the  matrix $\Delta_h$ in Step-(b).

Let $\kappa_p(V)=\|V\|_p\|V^{-1}\|_p$ with $p=1,2,\infty$ denotes the matrix $p$-norm condition number of $V$.
Numerically, the overall round-off errors of such a 3-steps diagonalization-based PinT direct solver is proportional to the condition number
of $V$, see Lemma 3.2 in \cite{caklovic2021parallel} for a detail round-off error analysis.
Hence, it is essential to design the matrix $B$ so that the condition number of  $V$ is well controlled for stable computations. In particular, it would become numerically unstable if $\kappa(V)$ grows  exponentially with respect to $n$.
In view of the discretization errors in space and time, it is acceptable to have $\kappa(V)=\mathcal{O}(n^{q})$ with a small $q$ (say $q\le 3$).

 \section{The diagonalization of $B$ and the condition number of $V$}\label{diagofB}
 In this subsection we will prove that the matrix $B$ with a special choice of $\alpha$ is indeed diagonalizable and also provide explicit formulas for computing its eigenvector matrix $V$ and estimating its condition number. More specifically, we will prove that $\kappa_1(V)=\CO(cn)$ with $c=\beta/\tau^2$ under the special choice $\alpha=\alpha_*:=1/\tau+\tau/\beta$.
 Although the trivial choice of $\alpha=0$ can be numerically used in the diagonalization-based direct PinT solver, there is no theoretical guarantee that the corresponding matrix $B$ is diagonalizable
 and/or the eigenvector matrix $V$ is well-conditioned for stable computation.
 In particular, the corresponding analysis based on the trivial choice of $\alpha=0$ seems to be far too difficult to perform due to much more complicated eigenvalue/eigenvector expressions, which shows the necessity of introducing the free design parameter $\alpha$.

 Let $\lambda$ be an eigenvalue of $B$ with nonzero eigenvector $\bm v=[v_0,\cdots,v_n]^\T$. By $B\bm v=\lambda \bm v$ we have
 \begin{align}
   \alpha v_0+v_n/\beta&=\lambda v_0,\label{evp0}\\
   -v_0+v_1/\tau&=\lambda v_1,\label{evp1}
 \end{align}
 and
 \begin{equation}\label{evpk}
   -v_0-v_{k-1}/\tau+v_k/\tau=\lambda v_k,~~k=2,\cdots,n.
 \end{equation}
 Obviously, $v_0\neq0$ and $\lambda\neq 1/\tau$ since otherwise it leads to $\bm v=\bm 0$. Without loss of generality, we choose $v_0=1/\tau$. It is readily seen from \eqref{evp1} and \eqref{evpk} that
 \begin{equation} \label{eigveck}
   v_k=\mu+\cdots+\mu^k=\frac{\mu^{k+1}-\mu}{\mu-1},~~k=1,\cdots,n,
 \end{equation}
 where $\mu=1/(1-\tau\lambda)\ne 1$. Assume $\alpha=\alpha_*:=1/\tau+\tau/\beta$ and denote $c=\beta/\tau^2$.
 Substituting $v_0=1/\tau$ and the above formula for $v_n$ into the equation \eqref{evp0} yields
 \[
 \frac{1}{\tau^2}+\frac{1}{\beta}+\frac{1}{\beta}(\mu+\cdots+\mu^n)=\frac{\lambda}{\tau},
 \]
 which, up on multiplying both sides by $\beta\mu$, reduces to
 \begin{equation}\label{mu-eq}
   c+\mu+\cdots+\mu^{n+1}=0.
 \end{equation}
The $(n+1)$ roots of (\ref{mu-eq}) determine the $(n+1)$ eigenvalues of $B$.
 For convenience, we define
 \begin{equation}\label{phi}
   \psi(\mu):=(\mu-1)(c+\mu+\cdots+\mu^{n+1})=\mu^{n+2}+(c-1)\mu-c.
 \end{equation}
 We have the following result.
 \begin{lemma}\label{lem-evp}
   If $\alpha=\alpha_*:=1/\tau+\tau/\beta$ and $c=\beta/\tau^2>1$, then the matrix $B$ has $n+1$ distinct eigenvalues. In particular, this implies the nonsymmetric matrix $B$ is indeed diagonalizable.
 \end{lemma}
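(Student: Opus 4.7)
The plan is to reduce diagonalizability of $B$ to a statement about simple roots of a single univariate polynomial, and then rule out repeated roots by a short $\gcd(\psi,\psi')$ argument that exploits the hypothesis $c>1$.

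First I would use the eigenvalue/eigenvector bookkeeping that is already set up before the lemma. Each eigenvalue $\lambda$ of $B$ is encoded by $\mu = 1/(1-\tau\lambda)$ with $\mu\ne 1$, and the consistency condition at index $0$ becomes the polynomial equation $c+\mu+\cdots+\mu^{n+1}=0$, i.e.\ $\psi(\mu)/(\mu-1)=0$ where $\psi(\mu)=\mu^{n+2}+(c-1)\mu-c$. The map $\mu\mapsto \lambda=(1-1/\mu)/\tau$ is injective on $\mu\ne 0$, and $\psi(0)=-c\ne 0$ guarantees no root is zero. Also $\mu=1$ is a root of $\psi$ but gives $\psi(1)/(1-1) $ indeterminate; checking directly, $c+\mu+\cdots+\mu^{n+1}$ evaluated at $\mu=1$ equals $c+n+1>0$, so $\mu=1$ is not a spurious contributor. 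Hence it suffices to show that the degree-$(n+1)$ polynomial $c+\mu+\cdots+\mu^{n+1}$ has $n+1$ distinct roots in $\mathbb{C}$, equivalently that the degree-$(n+2)$ polynomial $\psi$ has $n+2$ simple roots.

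The core step is to verify $\gcd(\psi,\psi')=1$ over $\mathbb{C}$. Computing $\psi'(\mu)=(n+2)\mu^{n+1}+(c-1)$, any common root $\mu$ satisfies $\mu^{n+1}=(1-c)/(n+2)$. Substituting into $\psi(\mu)=0$ after writing $\mu^{n+2}=\mu\cdot\mu^{n+1}$ gives a linear equation in $\mu$, which solves to $\mu = c(n+2)/[(c-1)(n+1)]$. Under the hypothesis $c>1$, this value is real and strictly positive, so $\mu^{n+1}>0$; but $\psi'(\mu)=0$ forces $\mu^{n+1}=(1-c)/(n+2)<0$, a contradiction. Therefore $\psi$ and $\psi'$ share no common root, so $\psi$ has $n+2$ simple roots and $c+\mu+\cdots+\mu^{n+1}$ has $n+1$ distinct nonzero roots, none of them equal to $1$.

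Finally I would translate back: these $n+1$ distinct $\mu$-values produce $n+1$ distinct $\lambda$-values via the injective (on nonzero $\mu$) rational change of variable, so $B$ has $n+1$ distinct eigenvalues and is therefore diagonalizable. The only genuinely delicate step is the simultaneous elimination between $\psi=0$ and $\psi'=0$; the rest is routine bookkeeping on the reduction and on handling the spurious root $\mu=1$ and the excluded value $\mu=0$. The hypothesis $c>1$ enters exactly once, to produce the sign contradiction in the elimination, which also clarifies why $\alpha=\alpha_*$ and the corresponding scaling are the natural choice — they make $\psi$ a trinomial to which this elementary discriminant-type argument applies cleanly.
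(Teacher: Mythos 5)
Your proposal is correct and follows essentially the same route as the paper: reduce to showing that $c+\mu+\cdots+\mu^{n+1}=0$ has no repeated root, deduce from $\psi(\mu_0)=\psi'(\mu_0)=0$ that $\mu_0^{n+1}=(1-c)/(n+2)$ and $\mu_0=c(n+2)/[(c-1)(n+1)]$, and contradict $c>1$. The only (immaterial) difference is that you conclude via a sign argument (a positive real $\mu_0$ cannot have negative $(n+1)$-st power), whereas the paper couples the two relations into the impossible identity $c^{n+1}(n+2)^{n+2}+(c-1)^{n+2}(n+1)^{n+1}=0$; your explicit handling of $\mu=0$, $\mu=1$ is a welcome bit of extra bookkeeping.
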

 \begin{proof}
   It suffices to prove that the equation \eqref{mu-eq} has no repeated roots.
   Assume to the contrary that $\mu=\mu_0$ is a repeated root of \eqref{mu-eq}.
   We then have $\psi(\mu_0)=\psi'(\mu_0)=0$. From $\psi'(\mu_0)=0$ we obtain
   $$\mu_0^{n+1}=(1-c)/(n+2).$$
   Substituting this into $\psi(\mu_0)=0$ gives
   $$\mu_0={c(n+2)\over(c-1)(n+1)}.$$
   Coupling the above two equations yields
   $$c^{n+1}(n+2)^{n+2}+(c-1)^{n+2}(n+1)^{n+1}=0,$$
   which contradicts to the condition $c>1$. This completes the proof.
 \end{proof}
 Denote by $\mu_1,\cdots,\mu_{n+1}$ the distinct roots of the equation \eqref{mu-eq}.
 The eigenvalues of $B$ are $\lambda_k=(1-1/\mu_k)/\tau$ with $k=1,\cdots,n+1$.
 The above eigenvector expression (\ref{eigveck}) implies that the eigenvector (after rescaled by $(\mu_k-1)$) corresponding to the eigenvalue $\lambda_k$ can be chosen as
 $$\bm v^{(k)}=[(\mu_k-1)/\tau,\mu_k^2-\mu_k,\cdots,\mu_k^{n+1}-\mu_k]^\T.$$ Hence, we have the eigendecomposition $B=V\diag\{\lambda_1,\cdots,\lambda_{n+1}\}V^{-1}$ with the eigenvector matrix
 \begin{equation}\label{V}
   V=\begin{bmatrix}
     (\mu_1-1)/\tau&\cdots&(\mu_{n+1}-1)/\tau\\
     \mu_1^2-\mu_1&\cdots&\mu_{n+1}^2-\mu_{n+1}\\
     \vdots&&\vdots\\
     \mu_1^{n+1}-\mu_1&\cdots&\mu_{n+1}^{n+1}-\mu_{n+1}
   \end{bmatrix}.
 \end{equation}
 We remark that  $V\Phi$  is also an eigenvector matrix  for any nonsingular diagonal matrix $\Phi$.

The following lemma shows that the roots of the equation \eqref{mu-eq} are located in the annulus $1<|\mu|<(2c-1)^{1/(n+1)}$ on the complex plane.
This implies $|\tau\lambda_k-1|=\frac{1}{|\mu_k|}\in ((2c-1)^{-1/(n+1)},1)$.
\begin{lemma}\label{lem-mu-bound}
  Let $\mu_1,\cdots,\mu_{n+1}$ be distinct roots of the equation \eqref{mu-eq}. If $c>1$, then $|\mu_k|>1$ and $|\mu_k|^{n+1}<2c-1$ for $k=1,\cdots,n+1$.
\end{lemma}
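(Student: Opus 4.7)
The plan is to exploit the equivalent form $\psi(\mu)=\mu^{n+2}+(c-1)\mu-c=0$ already derived in \eqref{phi}. For any root $\mu_k$ of $c+\mu+\cdots+\mu^{n+1}=0$, rewriting $\psi(\mu_k)=0$ gives the two convenient identities
\[
  \mu_k^{n+2}=c-(c-1)\mu_k, \qquad \mu_k^{n+1}=c/\mu_k-(c-1),
\]
and both bounds follow from applying the triangle inequality to these identities, using $c>1$.

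For the lower bound $|\mu_k|>1$, I would argue by contradiction: assume $|\mu_k|\le 1$ and use the first identity, which forces $|c-(c-1)\mu_k|=|\mu_k|^{n+2}\le 1$. First, if $|\mu_k|<1$, the reverse triangle inequality gives $|c-(c-1)\mu_k|\ge c-(c-1)|\mu_k|>c-(c-1)=1$, a contradiction. Second, if $|\mu_k|=1$, write $\mu_k=e^{\Ii\theta}$ and compute directly
\[
  |c-(c-1)\mu_k|^2 = 1+2c(c-1)(1-\cos\theta),
\]
which is strictly greater than $1$ for every $\theta\ne 0$. The only remaining possibility $\mu_k=1$ is excluded because plugging $\mu=1$ into $c+\mu+\cdots+\mu^{n+1}$ yields $c+n+1>0$, so $\mu=1$ is not a root of the original equation (it is only the spurious factor $(\mu-1)$ introduced in the definition of $\psi$).

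For the upper bound $|\mu_k|^{n+1}<2c-1$, I would use the second identity and the just-established fact $|\mu_k|>1$: by the triangle inequality,
\[
  |\mu_k|^{n+1} = \bigl|c/\mu_k-(c-1)\bigr| \le c/|\mu_k|+(c-1) < c+(c-1)=2c-1,
\]
which is immediate.

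The main obstacle is the boundary case $|\mu_k|=1$ in the lower-bound step: the strict triangle inequality there is not automatic, so one must either do the explicit $|c-(c-1)e^{\Ii\theta}|^2$ expansion or invoke the geometric fact that the line segment from $c$ to $1$ (parameterized by $\mu$ on the unit circle) only touches the unit disk at $\mu=1$. The rest of the argument is essentially a one-line application of the triangle inequality.
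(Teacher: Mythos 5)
Your proof is correct and follows essentially the same route as the paper: both rewrite $\psi(\mu_k)=0$ as the identities $\mu_k^{n+2}=c-(c-1)\mu_k$ and $\mu_k^{n+1}=c/\mu_k-(c-1)$, derive $|\mu_k|>1$ by contradiction via the triangle inequality, and get the upper bound $|\mu_k|^{n+1}<2c-1$ in one line from $|\mu_k|>1$. The only cosmetic difference is that you treat the boundary case $|\mu_k|=1$ by an explicit computation of $|c-(c-1)e^{\Ii\theta}|^2$, whereas the paper disposes of it through the equality case of the triangle inequality, forcing $\mu_k=1$, which is excluded since $c+n+1\neq 0$.
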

\begin{proof}
  It is obvious from $c>1$ that $\mu_k\neq1$.
  We claim that $|\mu_k|>1$; otherwise, we obtain from $\psi(\mu_k)=0$ that
  $$c=\mu_k^{n+2}+(c-1)\mu_k\le|\mu_k|^{n+2}+(c-1)|\mu_k|\le c,$$
  which is satisfied if and only if $\mu_k=1$, a contradiction.
  Next, it follows from $\psi(\mu_k)=0$ and $|\mu_k|>1$ that
  $$|\mu_k^{n+1}|=|c/\mu_k-(c-1)|\le c/|\mu_k|+|c-1|<2c-1.$$
  The proof is completed.
\end{proof}
To find an explicit expression for the inverse matrix $W=V^{-1}$, we shall make use of the Lagrange interpolation polynomials (such that $L_j(\mu_l)=\delta_{j,l}$ with $\delta_{j,l}$ being the Kronecker delta)
\begin{equation}\label{Lj}
  L_j(\mu)=\prod_{1\le k\le n+1,k\neq j}{\mu-\mu_k\over\mu_j-\mu_k}=\sum_{k=1}^{n+1}L_{jk}\mu^{k-1},~~j=1,\cdots,n+1,
\end{equation}
where $L_{jk}=L_j^{(k-1)}(0)/(k-1)!$ is the coefficient of $\mu^{k-1}$ in the polynomial expression of $L_j(\mu)$.
Let $U=\left[U_{kl}\right]_{k,l=1}^{n+1}$ be the Vandermonde matrix with $U_{kl}=\mu_l^{k-1}$ and $L=\left[L_{jk}\right]_{j,k=1}^{n+1}$.
It follows from the identities $L_j(\mu_l)=\sum_{k=1}^{n+1}L_{jk}\mu_l^{k-1}=\delta_{j,l}$  that $LU=I$ with $I$ being an identity matrix of size $(n+1)$.
\begin{lemma}\label{lem-Wjk}
  Let $W=V^{-1}=\left[W_{jk}\right]_{j,k=1}^{n+1}$. We have the following expressions
  \begin{align}
    W_{jk}=\begin{cases}
      L_{j,k+1}-L_j(1)/(n+c+1),&~~j<n+1,~k>1,\\
     -L_j(1)/(n+c+1),&~~j=n+1,~k>1,\\
      c\tau L_j(1)/(n+c+1)-\tau L_{j1},&~~k=1.
    \end{cases}
  \end{align}
\end{lemma}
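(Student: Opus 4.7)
The plan is to verify directly that the matrix $W$ defined by the claimed formulas satisfies $WV=I$. For each fixed row $j$ of $W$, I would form the polynomial
\[
p_j(\mu) := \frac{W_{j,1}}{\tau}(\mu-1) + \sum_{k=2}^{n+1}W_{jk}(\mu^k-\mu),
\]
so that, reading off the explicit entries of $V$, the $(j,l)$-entry of the product $WV$ is precisely $p_j(\mu_l)$. The goal then becomes to show $p_j(\mu_l)=\delta_{j,l}$ for every $l=1,\dots,n+1$. Notice that $p_j$ has degree at most $n+1$ and that $p_j(1)=0$ is automatic, since every summand vanishes at $\mu=1$.

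Next, I would substitute the claimed formulas for $W_{jk}$ into $p_j$, shift indices to rewrite $\sum_{k=2}^{n+1}L_{j,k+1}\mu^k = L_j(\mu)-L_{j,1}-L_{j,2}\mu$ (with the convention $L_{j,n+2}=0$, valid because $L_j$ has degree $n$), and collect the resulting powers of $\mu$. After the routine but careful cancellations one obtains the clean decomposition
\[
p_j(\mu) = L_j(\mu) - \frac{L_j(1)}{n+c+1}\cdot\frac{\psi(\mu)}{\mu-1},
\]
where $\psi(\mu)/(\mu-1) = c+\mu+\mu^2+\cdots+\mu^{n+1}$. The constant $n+c+1$ in the denominator appears by design: it is the value of $\psi(\mu)/(\mu-1)$ at $\mu=1$, which keeps the decomposition consistent with the automatic constraint $p_j(1)=0$.

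Once this identity is in hand, the conclusion is immediate. Since $\mu_l$ is a root of $\psi$ with $\mu_l\neq 1$ (guaranteed by Lemma \ref{lem-evp}), we have $\psi(\mu_l)/(\mu_l-1)=0$, so $p_j(\mu_l)=L_j(\mu_l)=\delta_{j,l}$ by the Lagrange interpolation property. The main obstacle is the intermediate algebraic step of recognizing the $\psi(\mu)/(\mu-1)$ structure among the substituted terms: this is where the particular choice $W_{j,1}=c\tau L_j(1)/(n+c+1)-\tau L_{j,1}$ and the systematic subtraction of $L_j(1)/(n+c+1)$ inside each $W_{jk}$ become essential, as they are precisely the combination that turns $p_j-L_j$ into a scalar multiple of $c+\mu+\mu^2+\cdots+\mu^{n+1}$.
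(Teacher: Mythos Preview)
Your proposal is correct and takes a genuinely different route from the paper. The paper works with $VW=I$, fixes a column $k$ of $W$, and solves for the entries $W_{jk}$ by first determining the column sum $S_k=\sum_j W_{jk}$ from the characteristic equation $c+\sum_l\mu_j^l=0$, and then contracting against the Lagrange coefficients $L_{m,l+1}$ to isolate each $W_{mk}$. In other words, the paper \emph{derives} the formula from scratch via a linear-algebraic inversion of the Vandermonde-like structure.

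You instead work with $WV=I$, fix a row $j$, and encode that row as a polynomial $p_j$ evaluated at the nodes $\mu_l$. Your key step is the clean identity $p_j(\mu)=L_j(\mu)-\dfrac{L_j(1)}{n+c+1}\cdot\dfrac{\psi(\mu)}{\mu-1}$, after which $p_j(\mu_l)=\delta_{jl}$ follows immediately from $\psi(\mu_l)=0$ and $\mu_l\neq 1$. This is a direct \emph{verification} rather than a derivation: shorter and more transparent once one already has the claimed formulas in hand, and it makes the role of the constant $n+c+1=\psi'(1)/... $ (i.e., the value of $c+\mu+\cdots+\mu^{n+1}$ at $\mu=1$) completely explicit. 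The paper's argument, by contrast, explains where those formulas come from. Both are valid; yours is the more economical proof of the lemma as stated, while the paper's is the more constructive one.
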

\begin{proof}
  First, we consider the case $k>1$. It follows from \eqref{V} and $VW=I$ that
  $$\sum_{j=1}^{n+1}(\mu_j-1)W_{jk}=0,$$
  and
  $$\sum_{j=1}^{n+1}(\mu_j^l-\mu_j)W_{jk}=\begin{cases}
    1,&~~l=k,\\
    0,&~~l>1,~l\neq k.
  \end{cases}$$
  For convenience, we denote $S_k=\sum_{j=1}^{n+1}W_{jk}$. It is readily seen from the above equations that
  \begin{equation}\label{Wjk-eq}
  \sum_{j=1}^{n+1}\mu_j^lW_{jk}=\begin{cases}
    S_k+1,&~~l=k,\\
    S_k,&~~l\neq k.
  \end{cases}.
  \end{equation}
  Recall that $\mu_1,\cdots,\mu_{n+1}$ are the roots of the equations \eqref{mu-eq}; namely, $c+\sum_{l=1}^{n+1}\mu_j^l=0$.
  We then obtain
  $$0=c\sum_{j=1}^{n+1}W_{jk}+\sum_{l=1}^{n+1}\sum_{j=1}^{n+1}\mu_j^lW_{jk}=
  cS_k+\left(\sum_{l=1,l\neq k}^{n+1}S_k\right)+(S_k+1)=cS_k+(n+1)S_k+1,$$
  which implies $S_k=-1/(n+c+1)$ is independent of $k$.
  Now, we multiply both sides of \eqref{Wjk-eq} by $L_{m,l+1}$ and then add from $l=0$ to $l=n$ to find
  \begin{align*}
    W_{mk}=&L_m(\mu_j)W_{jk}=\sum_{l=0}^nL_{m,l+1}S_k+\begin{cases}
      L_{m,k+1},&~~k<n+1,\\
      0&,~~k=n+1
    \end{cases}
    \\=&-{L_m(1)\over n+c+1}+\begin{cases}
      L_{m,k+1},&~~k<n+1,\\
      0&,~~k=n+1.
    \end{cases}
  \end{align*}
  Next, we consider the case $k=1$. It follows from \eqref{V} and $VW=I$ that
  $$\sum_{j=1}^{n+1}(\mu_j-1)W_{j1}=\tau,$$
  and
  $$\sum_{j=1}^{n+1}(\mu_j^l-\mu_j)W_{j1}=0,~~l>1.$$
  For convenience, we denote $S_1=\sum_{j=1}^{n+1}W_{j1}$. It is readily seen from the above equations that
  \begin{equation}\label{Wj1-eq}
  \sum_{j=1}^{n+1}\mu_j^lW_{jk}=\begin{cases}
    S_1,&~~l=0,\\
    S_1+\tau,&~~l=1,\cdots,n+1.
  \end{cases}.
  \end{equation}
  Recall that $\mu_1,\cdots,\mu_{n+1}$ are the roots of the equations \eqref{mu-eq}.
  We then obtain
  $$0=c\sum_{j=1}^{n+1}W_{jk}+\sum_{l=1}^{n+1}\sum_{j=1}^{n+1}\mu_j^lW_{jk}=cS_1+(n+1)(S_1+\tau),$$
  which implies $S_1=-(n+1)\tau/(n+c+1)$.
  Now, we multiply both sides of \eqref{Wj1-eq} by $L_{m,l+1}$ and then add from $l=0$ to $l=n$ to find
  \begin{align*}
    W_{m1}=&L_m(\mu_j)W_{jk}=L_{m1}S_1+\sum_{l=1}^nL_{m,l+1}(S_1+\tau)
    =-\tau L_{m1}+{c\tau L_m(1)\over n+c+1}.
  \end{align*}
  This completes the proof.
\end{proof}
The following lemma gives an explicit formula for $L_{jk}$, which will be used to estimate $\|W\|_1$.
\begin{lemma}\label{lem-Ljk}
  Let $L_{jk}$ with $1\le j,k\le n+1$ be the coefficient of $\mu^{k-1}$ in the polynomial expression of Lagrange interpolation polynomial $L_j(u)$ defined in \eqref{Lj}. We have
  \begin{equation}\label{Ljk}
    L_{jk}={\mu_j^{n+2-k}-1\over(n+2)\mu_j^{n+1}+c-1}={\mu_j^{1-k}-\mu_j^{-n-1}\over n+2+(c-1)\mu_j^{-n-1}}.
  \end{equation}
\end{lemma}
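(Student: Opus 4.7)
The plan is to write $L_j(\mu)$ in a closed form via the polynomial $p(\mu):=c+\mu+\cdots+\mu^{n+1}$ and then read off its coefficients directly. Since $\mu_1,\dots,\mu_{n+1}$ are the $n{+}1$ distinct roots of \eqref{mu-eq} and the leading coefficient of $p$ is $1$, we have the factorization $p(\mu)=\prod_{k=1}^{n+1}(\mu-\mu_k)$. Consequently the standard product formula for the Lagrange basis can be rewritten as
\begin{equation*}
  L_j(\mu)=\prod_{k\ne j}\frac{\mu-\mu_k}{\mu_j-\mu_k}=\frac{p(\mu)/(\mu-\mu_j)}{p'(\mu_j)}.
\end{equation*}

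Next I would evaluate $p'(\mu_j)$ by exploiting the relation $\psi(\mu)=(\mu-1)p(\mu)$ established in \eqref{phi}. Differentiating gives $\psi'(\mu)=p(\mu)+(\mu-1)p'(\mu)=(n+2)\mu^{n+1}+(c-1)$, and since $p(\mu_j)=0$ this collapses to
\begin{equation*}
  p'(\mu_j)=\frac{(n+2)\mu_j^{n+1}+c-1}{\mu_j-1}.
\end{equation*}

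The main step is to extract the coefficients of the quotient polynomial $q_j(\mu):=p(\mu)/(\mu-\mu_j)$. The cleanest route is to use $p(\mu_j)=0$ to rewrite
\begin{equation*}
  p(\mu)=p(\mu)-p(\mu_j)=\sum_{m=1}^{n+1}\bigl(\mu^m-\mu_j^m\bigr)=(\mu-\mu_j)\sum_{m=1}^{n+1}\sum_{s=0}^{m-1}\mu^s\mu_j^{m-1-s},
\end{equation*}
so that the coefficient of $\mu^{k-1}$ in $q_j(\mu)$ is $\sum_{m=k}^{n+1}\mu_j^{m-k}=(\mu_j^{n+2-k}-1)/(\mu_j-1)$. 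Dividing by $p'(\mu_j)$ cancels the common factor $\mu_j-1$ and yields the first expression for $L_{jk}$ in \eqref{Ljk}; multiplying numerator and denominator by $\mu_j^{-n-1}$ produces the second. I do not expect a serious obstacle here — the only care needed is to keep the index shift $s=k-1$ consistent and to verify that the factor $(\mu_j-1)$ indeed cancels rather than introducing a spurious singularity at $\mu_j=1$ (which cannot occur since $c>1$ forces $\mu_j\ne 1$, as noted in the proof of Lemma \ref{lem-mu-bound}).
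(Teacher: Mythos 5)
Your proof is correct and follows essentially the same route as the paper: both factor $c+\mu+\cdots+\mu^{n+1}=\prod_k(\mu-\mu_k)$, identify the normalizing constant $\prod_{k\ne j}(\mu_j-\mu_k)=\psi'(\mu_j)/(\mu_j-1)$ with $\psi'(\mu)=(n+2)\mu^{n+1}+c-1$, and then read off the coefficients of the quotient $p(\mu)/(\mu-\mu_j)$ as the geometric sums $1+\mu_j+\cdots+\mu_j^{n+1-k}$. The only difference is presentational: you perform the division explicitly via the telescoping identity $p(\mu)-p(\mu_j)=\sum_m(\mu^m-\mu_j^m)$, whereas the paper matches coefficients in $p(\mu)=a_j(\mu-\mu_j)L_j(\mu)$ and solves the resulting recursion, so no gap remains.
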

\begin{proof}
  Recall from \eqref{phi} that $\psi(\mu)=(\mu-1)(c+\mu+\cdots+\mu^{n+1})$.
  Since $\mu_1,\cdots,\mu_{n+1}$ are distinct roots of the polynomial equation $c+\mu+\cdots+\mu^{n+1}=0$, we can factor the polynomial as
  $$c+\mu+\cdots+\mu^{n+1}=\prod_{k=1}^{n+1}(\mu-\mu_k).$$
  Consequently,
  $\psi(\mu)=(\mu-1)(\mu-\mu_1)\cdots(\mu-\mu_{n+1})$ and
  $${\psi(\mu)\over(\mu-1)(\mu-\mu_j)}=\prod_{1\le k\le n+1,k\neq j}(\mu-\mu_k).$$
  We denote
  $$a_j=\prod_{1\le k\le n+1,k\neq j}(\mu_j-\mu_k)=\lim_{\mu\to\mu_j}{\psi(\mu)\over(\mu-1)(\mu-\mu_j)}={\psi'(\mu_j)\over\mu_j-1}.$$
  On the other hand, we obtain from \eqref{Lj} that
  \begin{align*}
    &c+\mu+\cdots+\mu^{n+1}=\prod_{k=1}^{n+1}(\mu-\mu_k)=a_j(\mu-\mu_j)L_j(u)=a_j(\mu-\mu_j)\sum_{k=1}^{n+1}L_{jk}\mu^{k-1}
    \\=&a_j[L_{j,n+1}\mu^{n+1}+(L_{jn}-\mu_jL_{j,n+1})\mu^n+\cdots+(L_{j1}-\mu_jL_{j2})\mu-\mu_jL_{j1}].
  \end{align*}
  Comparing the polynomial coefficients on both side of the equation gives
  $c=-a_j\mu_jL_{j1}$, and
  $$1=a_j(L_{j1}-\mu_jL_{j2})=\cdots=a_j(L_{jn}-\mu_jL_{j,n+1})=a_jL_{j,n+1}.$$
  It is readily seen that
  $$a_jL_{jk}=1+\mu_j+\cdots+\mu_j^{n+1-k}={\mu_j^{n+2-k}-1\over\mu_j-1}.$$
  This together with $a_j=\psi'(\mu_j)/(\mu_j-1)$ and \eqref{phi} proves \eqref{Ljk}.
\end{proof}
To show that $|L_{jk}|=O(1/n)$ uniformly for all $1\le j,k\le n+1$, we need the following lemma.
\begin{lemma}\label{lem-Ljk-bound}
  Assume $c>1$ and $n>11$. Let $\mu_1,\cdots,\mu_{n+1}$ be the distinct roots of \eqref{mu-eq}. We have
  $|n+2+(c-1)\mu_j^{-n-1}|>n/2$ for all $j=1,\cdots,n+1$.
\end{lemma}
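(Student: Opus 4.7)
The plan is to reduce the claim to estimating $|\psi'(\mu_j)|$ from below. Since $\psi'(\mu)=(n+2)\mu^{n+1}+(c-1)$, dividing at a root of $\psi$ gives the identity $n+2+(c-1)\mu_j^{-n-1}=\psi'(\mu_j)/\mu_j^{n+1}$, so the target becomes $|\psi'(\mu_j)|>(n/2)|\mu_j|^{n+1}$. The natural split is according to the size of $|\mu_j|^{n+1}$ relative to $(c-1)/(n+2)$, which is the modulus at which $\psi'$ could vanish.

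In the ``large'' regime $|\mu_j|^{n+1}\ge 2(c-1)/(n+2)$, the triangle inequality gives $|\psi'(\mu_j)|\ge (n+2)|\mu_j|^{n+1}-(c-1)\ge (n+2)|\mu_j|^{n+1}/2$, producing a ratio $\ge(n+2)/2>n/2$. Symmetrically, in the ``small'' regime $|\mu_j|^{n+1}\le(c-1)/(2(n+2))$, one obtains $|\psi'(\mu_j)|\ge(c-1)-(n+2)|\mu_j|^{n+1}\ge(c-1)/2$, so the ratio is at least $n+2>n/2$. Both non-resonant cases are thus immediate.

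The delicate ``resonant'' case is the intermediate range $(c-1)/(2(n+2))<|\mu_j|^{n+1}<2(c-1)/(n+2)$, where the two terms defining $\psi'(\mu_j)$ can partially cancel. Here I would invoke the root equation $\psi(\mu_j)=0$ in the form $\mu_j^{n+1}=c/\mu_j-(c-1)$, which transforms the derivative into $\psi'(\mu_j)=(n+2)c/\mu_j-(n+1)(c-1)$. This expression vanishes only at $\mu_j=c(n+2)/[(c-1)(n+1)]$, which is a strictly positive real number. However $c+\mu+\mu^2+\cdots+\mu^{n+1}$ is strictly positive on $[0,\infty)$ whenever $c>0$, so no root $\mu_j$ is real positive, and $\mu_j$ is qualitatively distinct from this critical value.

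The main obstacle is upgrading this qualitative distinctness into the quantitative lower bound $|(n+2)c/\mu_j-(n+1)(c-1)|>(n/2)|\mu_j|^{n+1}$ uniformly in the resonant range. My approach is to write $\mu_j=re^{i\theta}$ and combine (i) the modulus identity $r^{n+2}=|c-(c-1)\mu_j|$ from $\psi(\mu_j)=0$, which in the resonant range pins $r$ close to $1$, with (ii) the argument constraint arising from equating arguments in the same identity, which forces $(n+2)\theta$ to be restricted. Together with the hypothesis $n>11$ (whose role is to absorb the residual numerical constants and rule out small-$n$ pathologies), these should yield the desired bound. This quantitative closure in the resonant regime is the only non-routine step; the two non-resonant regimes follow from the triangle inequality alone.
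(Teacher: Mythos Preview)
Your reduction to $|\psi'(\mu_j)|>(n/2)\,|\mu_j|^{n+1}$ and the triangle-inequality handling of the two non-resonant regimes are correct, but they are cosmetic: negating the conclusion already forces $(c-1)|\mu_j|^{-n-1}\in[(n+4)/2,\,(3n+4)/2]$, which sits strictly inside your resonant window. So the entire content of the lemma is the resonant case, and there your proposal stops at an outline (``these should yield the desired bound'').

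The gap is that the qualitative observation ``no root is real positive'' does not by itself give the quantitative estimate in the resonant regime; you still have to show that $\mu_j$ stays \emph{quantifiably} far from the real positive critical point $c(n+2)/[(c-1)(n+1)]$, uniformly in $c>1$. The paper carries out precisely the polar-coordinate argument you sketch, but the execution is not routine. With $\mu_j=re^{i\theta}$, $\theta\in(0,\pi]$, $b=r^{n+1}/(c-1)$, $a=c/[r(c-1)]$, the root equation $\mu_j^{n+1}=c/\mu_j-(c-1)$ yields
\[
b\cos[(n+1)\theta]=a\cos\theta-1,\qquad b\sin[(n+1)\theta]=-a\sin\theta,\qquad b^2=1-2a\cos\theta+a^2.
\]
Combining these with the assumed failure $|n+2+(c-1)\mu_j^{-n-1}|\le n/2$ one extracts $b<2/n$, $a<\cos\theta$, $a\cos\theta>1-2/n$, $a\sin\theta<2/n$, and (from $\sin[(n+1)\theta]\le 0$) $\theta\ge\pi/(n+1)$. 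Chaining these gives
\[
\frac{\pi(n-2)}{n(n+1)}<\theta\cos\theta<\sin\theta<\frac{2}{n-2},
\]
which is false exactly when $n>11$. This chain is where the hypothesis $n>11$ is actually consumed; without carrying it out (or an equivalent quantitative estimate) the proof is incomplete.
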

\begin{proof}
  We will prove by contradiction. Assume to the contrary that
  $|n+2+(c-1)\mu_j^{-n-1}|\le n/2$ for some $\mu_j=re^{i\theta}$ with $r>1$ and $\theta\in[0,\pi]$.
  Let $b=r^{n+1}/(c-1)>0$, we then have
  \[
  \left|b(n+2)+\cos[(n+1)\theta]-i\sin[(n+1)\theta]\right|\le bn/2,
  \]
  which gives
  \begin{align} \label{ieq2}
    \cos[(n+1)\theta]<2b+\cos[(n+1)\theta]&<-{bn\over2},\qquad
    |\sin[(n+1)\theta]| \le{bn\over2}.
  \end{align} 
  Note from \eqref{mu-eq} and \eqref{phi} that
  $\mu_j^{n+1}=c/\mu_j-(c-1)$, which upon dividing both sides by $(c-1)$ gives
  \[
  b \left(\cos[(n+1)\theta]+i\sin[(n+1)\theta]\right)=
  a(\cos\theta-i\sin\theta)-1,
  \]
  where $a=c/[r(c-1)]>0$. Hence we have
  \begin{align} \label{eq3}
    b&=\sqrt{1-2a\cos\theta+a^2},\quad
    b\cos[(n+1)\theta] =a\cos\theta-1,\quad
    b\sin[(n+1)\theta] =-a\sin\theta.
  \end{align} 
  The third equality together with $\theta\in[0,\pi]$ implies $\sin[(n+1)\theta]=-(a/b)\sin\theta\le 0$, which  gives $\theta\ge\pi/(n+1)$.
  We further obtain from the three equalities in (\ref{eq3}) and two inequalities in (\ref{ieq2}) that
  \begin{align*}
    &0<1-2a\cos\theta+a^2=b^2<{b^2n\over 2}<-b\cos[(n+1)\theta]=1-a\cos\theta<b<-{2\over n}\cos[(n+1)\theta]<{2\over n}
  \end{align*}
and hence (note the inequality $1-2a\cos\theta+a^2<1-a\cos\theta$ gives $a<\cos\theta$)
 \begin{align}
a\sin\theta=b|\sin[(n+1)\theta]|\le{b^2n\over2}<{2\over n}, \qquad
1-{2\over n}<a\cos\theta<a< \cos\theta.
\end{align}
  Consequently, we obtain $(n-2)<na$, $\theta\in[\pi/(n+1),\pi/2)$ such that $\theta<\tan\theta$,  and
   \begin{align}
  {\pi(n-2)\over n(n+1)}<\theta\cos\theta<\sin\theta<{2\over na}<{2\over n-2},
\end{align}
 which is not true for $n>11$ and hence contradicts our assumption.
  This completes our proof.
\end{proof}
Finally, we are ready to estimate the condition number of the eigenvector matrix $V$ in \eqref{V}.
\begin{theorem}
  If $\alpha=\alpha_*:=1/\tau+\tau/\beta$ and $c=\beta/\tau^2>1$, then $\kappa_1(V)=\|V\|_1 \|W\|_1=\CO(cn)$.
\end{theorem}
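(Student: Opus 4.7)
The plan is to estimate $\|V\|_1$ and $\|W\|_1$ separately from the explicit formulas in \eqref{V} and Lemmas~\ref{lem-Wjk}--\ref{lem-Ljk-bound}, then multiply. For $\|V\|_1$, which equals the maximum column sum of $V$, each column $j$ contributes $|\mu_j-1|/\tau+\sum_{k=2}^{n+1}|\mu_j^k-\mu_j|$. By Lemma~\ref{lem-mu-bound}, $1<|\mu_j|<(2c-1)^{1/(n+1)}$, so each summand satisfies $|\mu_j^k-\mu_j|\le|\mu_j|^{n+1}+|\mu_j|=\CO(c)$, giving $\sum_{k=2}^{n+1}|\mu_j^k-\mu_j|=\CO(nc)$; the remaining first entry is $\CO(1/\tau)$, which is dominated in the typical scaling $\tau=T/n$ with $c\ge1$. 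Hence $\|V\|_1=\CO(nc)$.

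For $\|W\|_1$, Lemma~\ref{lem-Wjk} expresses $W_{jk}$ in terms of $L_{jk}$ and the combination $L_j(1)/(n+c+1)$. The crucial simplification, which I would derive up front, is
\[
L_j(1)=\sum_{k=1}^{n+1}L_{jk}=\frac{-(n+c+1)}{(n+2)\mu_j^{n+1}+c-1},
\]
obtained by summing the formula in Lemma~\ref{lem-Ljk} and using the root identity $c+\mu_j+\cdots+\mu_j^{n+1}=0$. Combined with Lemma~\ref{lem-Ljk-bound}, this yields $|L_j(1)/(n+c+1)|\le 2/n$, and the uniform bound $|L_{j,k+1}|=\CO(1/n)$ follows directly from Lemmas~\ref{lem-Ljk} and~\ref{lem-Ljk-bound}. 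Consequently each column $k\ge 2$ of $W$ has column sum $\CO(1)$.

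The delicate case is column 1 of $W$, where the factor $c\tau$ appears. Substituting the collapsed form of $L_j(1)$ and $L_{j,1}$ into Lemma~\ref{lem-Wjk} and using $\mu_j^{n+2}=c-(c-1)\mu_j$ from $\psi(\mu_j)=0$ produces the clean expression
\[
W_{j,1}=-\frac{c\tau}{\mu_j\bigl[(n+2)\mu_j^{n+1}+c-1\bigr]},
\]
so that Lemma~\ref{lem-Ljk-bound} together with $|\mu_j|>1$ gives $|W_{j,1}|\le 2c\tau/n$, and the column 1 sum is $\CO(c\tau)$. Multiplying the two estimates, $\kappa_1(V)=\|V\|_1\cdot\|W\|_1=\CO(nc)\cdot\CO(\max(c\tau,1))=\CO(cn)$ in the regime relevant to the method. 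The main obstacle is precisely this first column of $W$: without the algebraic collapse of $W_{j,1}$ via the characteristic equation, the $c\tau$ prefactor would destroy the $\CO(cn)$ estimate, so this simplification — paired with the sharp denominator bound of Lemma~\ref{lem-Ljk-bound} — is the heart of the argument.
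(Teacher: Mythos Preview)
Your estimate of $\|V\|_1=\CO(cn)$ and the treatment of columns $k\ge 2$ of $W$ are correct and essentially identical to the paper's. Your closed form $L_j(1)=-(n+c+1)/[(n+2)\mu_j^{n+1}+c-1]$ is also valid and slightly sharper than the paper's triangle-inequality bound $|L_j(1)|\le 4(n+1)/n$.

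The gap is in your treatment of column~1 of $W$. Your collapsed formula $W_{j,1}=-c\tau/[\mu_j D_j]$ with $D_j=(n+2)\mu_j^{n+1}+c-1$ is correct, but feeding in only $|\mu_j|>1$ and $|D_j|>n/2$ from Lemma~\ref{lem-Ljk-bound} gives $|W_{j,1}|\le 2c\tau/n$, hence a column-1 sum of order $c\tau=\beta/\tau$. This is \emph{not} $\CO(1)$ under the theorem's sole hypothesis $c>1$; you have to invoke the specific scaling $\beta=\tau\delta^{1/2}$ from the later convergence analysis, which is exactly what your hedge ``in the regime relevant to the method'' concedes. As stated, the theorem is therefore not proved.

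The paper avoids this precisely by \emph{not} collapsing the two terms in Lemma~\ref{lem-Wjk}. Keeping them separate,
\[
|W_{j,1}|\le \frac{c\tau\,|L_j(1)|}{n+c+1}+\tau|L_{j,1}|
\le \frac{4c\tau(n+1)}{n(n+c+1)}+\frac{4\tau}{n}
< \frac{4\tau(n+1)}{n}+\frac{4\tau}{n}=\frac{4\tau(n+2)}{n},
\]
where the key step is the trivial inequality $c/(n+c+1)<1$, which kills the dangerous factor $c$ regardless of its size. This yields a column-1 sum of order $\tau n=T=\CO(1)$ and hence $\|W\|_1=\CO(1)$ unconditionally. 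Ironically, the algebraic collapse you highlight as ``the heart of the argument'' discards exactly the $(n+c+1)$ in the denominator that makes this cancellation possible; the simpler bookkeeping is what actually closes the proof.
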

\begin{proof}
  Lemma \ref{lem-mu-bound} implies $|\mu_k|^j<2c-1$ for any $1\le j,k\le n+1$.
  It is easily seen from \eqref{V} that
  $$\sum_{j=1}^{n+1}|V_{jk}|=|\mu_k-1|/\tau+\sum_{j=2}^{n+1}|\mu_k^j-\mu_k|\le (2c)/\tau+(4c-2)n= (2c/T)n+(4c-2)n.$$
  In particular, $\|V\|_1=\CO(cn)$.
  Lemma \ref{lem-mu-bound} also implies $|\mu_j|>1$ for any $1\le j\le n+1$.
  Assume $n>11$. It then follows from Lemma \ref{lem-Ljk} and Lemma \ref{lem-Ljk-bound} that
  $$|L_{jk}|\le{|\mu_j|^{1-k}+|\mu_j|^{-n-1}\over |n+2+(c-1)\mu_j^{-n-1}|}<{1+1\over n/2}=4/n$$
  for all $1\le j,k\le n+1$. This together with Lemma \ref{lem-Wjk} and $|L_j(1)|=|\sum_{k=1}^{n+1}L_{jk}|<\frac{4(n+1)}{n}$ yields
    \begin{align*}
    W_{jk}\le\begin{cases}
      |L_{j,k+1}|+{|L_j(1)|\over n+c+1}\le {4\over n}+{4(n+1)\over n(n+c+1)}\le{8\over n},&~~j<n+1,~k>1,\\
     {|L_j(1)|\over n+c+1}\le{4(n+1)\over n(n+c+1)}<{4\over n},&~~j=n+1,~k>1,\\
      {c\tau |L_j(1)|\over n+c+1}+\tau |L_{j1}|\le {4c\tau(n+1)\over n(n+c+1)}+{4\tau\over n}<{4 \tau(n+1)\over n }+{4\tau\over n}={4\tau(n+2)\over n},&~~k=1.
    \end{cases}
  \end{align*}
  The above inequalities can be combined into
  $|W_{jk}|<[8+4\tau(n+2)]/n$ for all $1\le j,k\le n+1$ with $n>11$. In particular, $\|W\|_1=\CO(1)$.
  Therefore, the condition number of the eigenvector matrix $V$ with respect to the matrix $1$-norm is
  $\kappa_1(V)=\|V\|_1 \|W\|_1=\CO(cn)$.
  This completes the proof.
\end{proof}

Our subsequent convergence analysis shows that with $\alpha=\alpha_*$ the choice of regularization parameter $\beta=\tau\delta^{1/2}$  gives an $\CO(\delta^{1/2})$ convergence rate, which yields a provable condition number estimate  $\kappa_1(V)=\CO(\delta^{1/2}n/\tau)=\CO(\delta^{1/2}n^2)$.
We remark that the trivial choice $\alpha=0$ may also work well in numerical,
but the corresponding condition number of $V$ can be larger and  it is also more difficult to estimate due to very complicated characteristic equations for the eigenvalues of $B$.

  Figure \ref{FigCondV} illustrates the two very different growth rates of the condition number of $V$ corresponding to the MQBVM (with $\alpha=0,\beta=\delta$) and our PQBVM (with $\alpha=\alpha_*,\beta=\tau\delta^{1/2}$), respectively.
For a large mesh size $n$ and noise level $\delta$, the condition number of $V$ with $\alpha=\alpha_*$ is indeed several order of magnitude smaller than that with $\alpha=0$, which also numerically validated the estimated condition number growth rate with the optimized choice $\alpha=\alpha_*$.
\begin{figure}[htp!]
	\begin{center}
		\includegraphics[width=1\textwidth]{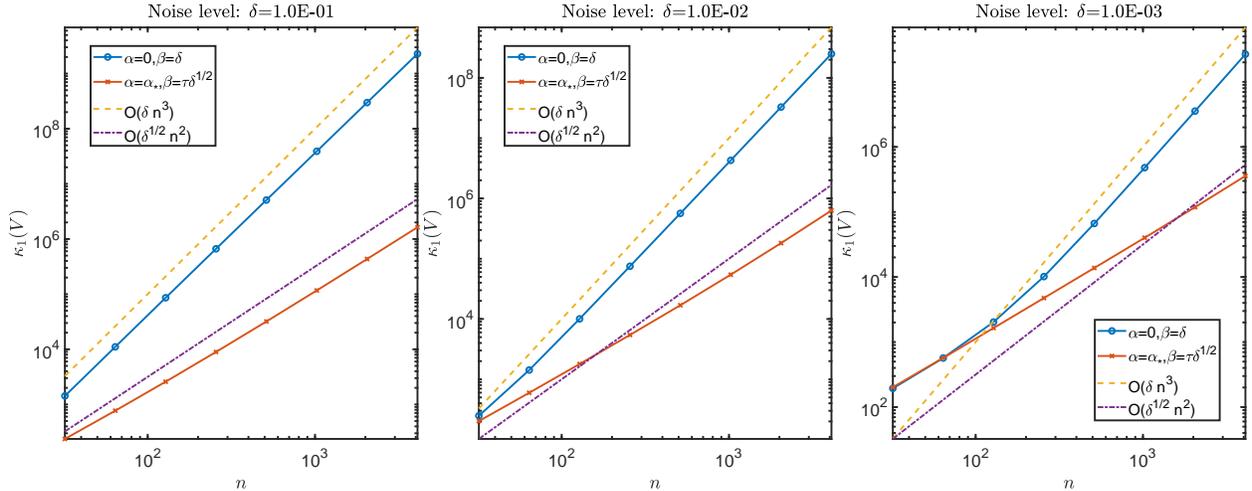}
	\end{center}
	\vspace{-2em}
	\caption{Comparison of the condition number $\kappa_1(V)$ and its estimated bounds with two different choices of $\alpha$ and $\beta$.}
	
	\label{FigCondV}
\end{figure}

\section{Convergence analysis}
In this section, we will analyze the convergence rate of our proposed PQBVM, where the optimized parameter $\alpha=\alpha_*=1/\tau+\tau/\beta$ leads to a mesh-dependent regularization parameter $\beta>0$. We emphasize that the presented analysis is different from the MQBVM \cite{Wei_2014b} case with $\alpha=0$.

Let $\mathbb{A}=-\Delta$ and define a Hilbert function space $H=H^1_0(\Omega)$ equipped with the standard $L^2$ norm $\|f\|_2:=(f,f)^{1/2}=\left(\int _{\Omega}f^2d\bm x\right)^{\frac{1}{2}}$. Then the self-adjoint operator $\mathbb{A}$ admits a set of orthonormal eigenfunctions $\{X_l\}_{l\geq 1}$ in $H$, associated to a set of eigenvalues $\{\lambda _l\}_{l\geq 1}$ such that $\mathbb{A}X_l=\lambda_l X_l$ with $0<\lambda _1<\lambda _2<\cdots$ and $\lim \limits_{l\rightarrow \infty} \lambda_l=+\infty$. Given any $g\in H$, it has a series expansion $g=\sum ^{\infty}_{l=1}(g,X _l)X _l$, with $(g, X_l):=\int _{\Omega}g X_l d\bm x$ for all $l$.
We assume the measured data $g_{\delta}(x)\in L^2(\Omega)$ and it satisfies
\begin{align}\label{gBnd}
\|g-g_{\delta}\|_2\leq \delta.
\end{align}
We also impose \textit{a priori} bound for the heat source, that is,
\begin{align}\label{fBnd}
\|f\| _{H^p(\Omega)}:=\left(\sum ^{\infty}_{l=1}\lambda ^p_l(f,X _l)^2 \right)^{\frac{1}{2}} \leq E_f, \quad p\geq 0,
\end{align}
where $E_f>0$ is a constant. In particular, when $p=0$, (\ref{fBnd}) is reduced to the $L^2$ norm, that is
\begin{align}\label{normexpansion}
\|f\| _{H^0(\Omega)}=\left(\sum ^{\infty}_{l=1}(f,X_l)^2\right)^{1/2}= \|\sum ^{\infty}_{l=1}(f,X _l)X _l\|_2=\|f\| _2 =\left(\int _{\Omega}f^2dx\right)^{\frac{1}{2}}.
\end{align}
Consider the exact  noisy-free problem (\ref{state}), by separation of variables and the initial condition $\phi=\sum ^{\infty}_{l=1}(\phi,X _l)X _l$, the unknown solution function $u$ can be expressed as (by solving the sequence of separated ODE initial value problem: $u_l'(t)+\lambda_l u_l(t)=(f,X_l)$ with $u_l(0)=(\phi,X_l)$)
\begin{align}
u(\cdot,t)=\sum ^{\infty}_{l=1} u_l(t)X_l=\sum ^{\infty}_{l=1}e^{-\lambda _lt}\left((f,X_l)\frac{e^{\lambda _lt}}{\lambda _l}+c_l\right)X_l,\quad c_l= {(\phi, X_l)}-\frac{{(f,X_l)}}{\lambda _l}.
\end{align}
Applying the final time condition $u(\cdot,T)=g=\sum ^{\infty}_{l=1}(g,X _l)X _l$, we further obtain
\begin{align}
u(\cdot,T)=\sum ^{\infty}_{l=1}\left(\frac{1-e^{-\lambda _lT}}{\lambda _l}(f,X_l)+e^{-\lambda _lT}(\phi, X_l) \right)X_l =\sum ^{\infty}_{l=1}(g,X_l)X_l=g,
\end{align}
which gives the exact source expression
\begin{align}\label{exsol}
f=\sum ^{\infty}_{l=1}(f,X_l) X_l,\quad \tn{with} (f,X_l)=\frac{\lambda _l}{1-e^{-\lambda _lT}}\left ((g,X_l)-e^{-\lambda _lT}(\phi,X_l) \right).
\end{align}
Clearly, this exact formula (\ref{exsol}) is unstable for reconstructing $f$ with a noisy $g_\delta$ since $\lambda _l\to\infty$ will magnify the noise, unless certain noise filters or regularization techniques are incorporated.
Similarly, we can obtain the representation for the regularized solutions. See (\ref{noisefrsol}) and (\ref{noisysol}) below.

Now we give the error estimate between the regularization solution and the exact solution.

\begin{theorem}
\label{thmcov}
Let $f^{\delta}_{\alpha,\beta}(x)$ be the regularization solution of the problem (\ref{state_qbvm_new}) with the measured data $g_{\delta}$ satisfying (\ref{gBnd}). Let $f(x)$ be the exact solution of the problem (\ref{state}) and satisfy \textit{a priori} condition (\ref{fBnd}) for any $p\geq0$. Then, by fixing $\alpha =\alpha_*=1/\tau+\tau/\beta$, there holds
\begin{enumerate}
\item[(1)] for $0<p<2$, if we choose $\beta=\tau \left(\frac{\delta}{E_f}\right)^{\frac{2}{p+2}}$, we have
\begin{align}
\label{case1bd}
\|f^{\delta}_{\alpha,\beta} - f\|_2 \leq C_1E^{\frac{2}{p+2}}_f \delta ^{\frac{p}{p+2}}+ O(\tau ^{\frac{p}{2}});
\end{align}
\item[(2)] for $2\le p<4$, if we choose $\beta=\tau\left(\frac{\delta}{E_f}\right)^{\frac{1}{2}}$, we have
\begin{align}
\label{case2bd}
\|f^{\delta}_{\alpha,\beta} - f\|_2 \leq \left(1+C_2(E_f\delta)^{\frac{1}{2}} \max \left\{1, \left(\frac{\delta}{E_f} \right)^{\frac{p-2}{4}} \right\}  \right) (E_f\delta)^{\frac{1}{2}}  +O(\tau);
\end{align}
\item[(3)] for $p\ge 4$, if we choose $\beta =\frac{\tau}{\sqrt{\tau +1}}\left(\frac{\delta}{E_f}\right)^{\frac{1}{2}}$, we have
\begin{align}
\label{case3bd}
\|f^{\delta}_{\alpha,\beta} - f\|_2 \leq C_3\sqrt{\tau +1}(E_f \delta)^{\frac{1}{2}}+O(\tau);
\end{align}
\end{enumerate}
where $C_1, C_2, C_3$ are positive constants that only depend on $p$, $T$, and $\lambda_1$.
\end{theorem}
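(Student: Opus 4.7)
The plan is to follow the standard spectral multiplier analysis for quasi-boundary value regularization, adapted to the PQBVM whose parameter $\alpha_*=1/\tau+\tau/\beta$ carries a $\tau$-dependent perturbation. First I will derive the spectral coefficients of the regularized source by expanding $f=\sum_l f_l X_l$, solving the decoupled ODEs $u_l'+\lambda_l u_l=f_l$ with $u_l(0)=(\phi,X_l)$, and imposing the final condition $u_l(T)+\beta(\alpha+\lambda_l)f_l=(g_\delta,X_l)$. Substituting the closed form $u_l(T)=e^{-\lambda_l T}(\phi,X_l)+\frac{1-e^{-\lambda_l T}}{\lambda_l}f_l$ yields
\[
f^\delta_{\alpha,\beta,l}=\frac{\lambda_l\bigl((g_\delta,X_l)-e^{-\lambda_l T}(\phi,X_l)\bigr)}{(1-e^{-\lambda_l T})+\beta\lambda_l(\alpha+\lambda_l)},
\]
with an analogous formula for $f_{\alpha,\beta,l}$ obtained by replacing $(g_\delta,X_l)$ with $(g,X_l)$. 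The triangle inequality gives $\|f^\delta_{\alpha,\beta}-f\|_2\le\|f^\delta_{\alpha,\beta}-f_{\alpha,\beta}\|_2+\|f_{\alpha,\beta}-f\|_2$, and a short manipulation using \eqref{exsol} produces the bias representation $f_{\alpha,\beta,l}-f_l=-\frac{\beta\lambda_l(\alpha+\lambda_l)}{(1-e^{-\lambda_l T})+\beta\lambda_l(\alpha+\lambda_l)}(f,X_l)$.

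Next I will reduce each piece to a supremum of a spectral multiplier. For the data piece, Parseval and $\|g-g_\delta\|_2\le\delta$ give $\|f^\delta_{\alpha,\beta}-f_{\alpha,\beta}\|_2\le\delta\sup_l \lambda_l/[(1-e^{-\lambda_l T})+\beta\lambda_l(\alpha_*+\lambda_l)]$. Substituting $\alpha_*=1/\tau+\tau/\beta$, the denominator contains the term $(\beta/\tau)\lambda_l$, and keeping only this term already produces the clean uniform bound $\tau/\beta$, so the data error is at most $\delta\tau/\beta$. For the bias piece, the a priori bound $\|f\|_{H^p}\le E_f$ and factoring $\lambda_l^{p/2}$ out of $(f,X_l)$ reduce the problem to controlling $E_f\sup_\lambda M(\lambda)$ with
\[
M(\lambda)=\frac{\beta\lambda^2+(\beta/\tau+\tau)\lambda}{(1-e^{-\lambda T})+\beta\lambda^2+(\beta/\tau+\tau)\lambda}\,\lambda^{-p/2}.
\]
I will split $M$ using $\frac{a+b}{S+a+b}\le\frac{a}{S+a}+\frac{b}{S+b}$ with $a=\beta\lambda^2$ and $b=(\beta/\tau+\tau)\lambda$, handling the MQBVM-like first term and the linear-in-$\lambda$ residual separately, and apply the interpolation $\frac{y}{1+y}\le y^\theta$ for $\theta\in[0,1]$ with a $p$-dependent exponent.

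For the case analysis I use $S\ge c_T:=1-e^{-\lambda_1 T}$ throughout. In case (1), taking $\theta=p/4$ controls the MQBVM multiplier by $\beta^{p/4}$; balancing $\delta\tau/\beta$ against $E_f\beta^{p/4}$ through $\beta=\tau(\delta/E_f)^{2/(p+2)}$ yields $C_1 E_f^{2/(p+2)}\delta^{p/(p+2)}$, while the residual produces the $O(\tau^{p/2})$ correction via the bound $((\beta/\tau+\tau)/c_T)^{p/2}$ and subadditivity of $x\mapsto x^{p/2}$. In case (2), the MQBVM multiplier saturates to $\sqrt{\beta/c_T}$ via AM-GM on $S+\beta\lambda^2\ge 2\sqrt{c_T\beta}\,\lambda$; the choice $\beta=\tau(\delta/E_f)^{1/2}$ balances data and MQBVM bias to $(E_f\delta)^{1/2}$, while the extra $(\delta/E_f)^{(p-2)/4}$ factor in \eqref{case2bd} reflects a tighter interpolation exponent $\theta=p/4$ needed once $p>2$. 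In case (3) with $p\ge 4$, the $\tau\lambda$ residual begins to dominate after multiplication by $\lambda^{-p/2}$, and the rescaled choice $\beta=\tau(\delta/E_f)^{1/2}/\sqrt{\tau+1}$ absorbs the extra $\tau$ so that both contributions scale as $C_3\sqrt{\tau+1}(E_f\delta)^{1/2}$.

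The main obstacle will be obtaining sharp uniform supremum estimates for $M(\lambda)$ over $\lambda\ge\lambda_1$, since the two residual subterms $\beta\lambda/\tau$ and $\tau\lambda$ move in opposite directions as $\tau$ varies and must be balanced simultaneously against the MQBVM contribution. Proving that the proposed $\beta$ genuinely realizes this balance in every regime requires interpolation exponents tuned case-by-case, together with a careful audit of the $c_T$ and $\lambda_1$ dependencies to ensure that $C_1,C_2,C_3$ are truly independent of $\beta$ and $\tau$. The $p\ge 4$ regime is the most delicate, because standard MQBVM saturation no longer dominates and the non-obvious factor $\sqrt{\tau+1}$ in $\beta$ must be derived by matching the leading residual contribution rather than by a straightforward AM-GM balance.
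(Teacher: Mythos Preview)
Your overall approach matches the paper's: spectral decomposition, triangle split into data and bias pieces, the data bound $\tau\delta/\beta$, and control of the bias multiplier via $\frac{a+b}{S+a+b}\le\frac{a}{S+a}+\frac{b}{S+b}$ with $a=\beta\lambda^2$ and $b=(\beta/\tau+\tau)\lambda$, followed by case-by-case interpolation. The gap is that you have the dominant bias contribution backwards. Because $\alpha_*\beta=\beta/\tau+\tau\ge 2\sqrt{\beta}$, it is the \emph{linear} residual term $b$ that dominates the MQBVM quadratic term $a$, not the reverse. In case~(1) the prescribed $\beta=\tau(\delta/E_f)^{2/(p+2)}$ does \emph{not} balance $\tau\delta/\beta$ against $E_f\beta^{p/4}$ (just compute both sides: they differ by a stray $\tau^{p/4}$ factor); it balances $\tau\delta/\beta$ against $E_f(\beta/\tau)^{p/2}$, which is precisely the leading part of your own residual bound $((\beta/\tau+\tau)/c_T)^{p/2}$ after subadditivity, with $\tau^{p/2}$ left over as the correction. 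The paper makes this transparent by first absorbing $\beta^{p/4}\le 2^{-p/2}(\alpha_*\beta)^{p/2}$ so that the entire bias is bounded by $C(\beta/\tau+\tau)^{p/2}$, and only then splitting and balancing. The same swap is needed in case~(2): the operative balance is $\tau\delta/\beta=(\beta/\tau)E_f$, not data versus the AM-GM MQBVM saturation $\sqrt{\beta}$; the paper again rewrites both bias pieces in terms of $\alpha_*\beta=\beta/\tau+\tau$ before choosing $\beta$. Once you recognize the $(\beta/\tau)$-driven residual as the principal bias term, the rest of your outline and all three stated choices of $\beta$ go through exactly as in the paper.
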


\begin{proof}
Let $f_{\alpha, \beta}$ be the noise-free regularization solution. By the triangular inequality,  we have
\begin{align}
\|f^{\delta}_{\alpha,\beta}-f\|_2 \leq \|f^{\delta}_{\alpha,\beta}-f_{\alpha,\beta}\|_2 + \|f_{\alpha,\beta}-f\|_2,
\end{align}
where each term will be estimated separately based on the corresponding series expression.

By the separation of variables and the given side conditions, we can verify the following expressions
\begin{align}
\label{noisefrsol}
(f_{\alpha,\beta},X_l)=&\frac{\lambda _l}{1-e^{-\lambda _lT}+\alpha \beta\lambda _l+\beta \lambda^2_l}\left((g,X_l)-e^{-\lambda_lT}(\phi,X_l)\right)\\
\label{noisysol}
(f^{\delta}_{\alpha,\beta},X_l)=&\frac{\lambda _l}{1-e^{-\lambda _lT}+\alpha \beta\lambda _l+\beta \lambda^2_l}\left((g_{\delta},X_l)-e^{-\lambda_lT}(\phi,X_l)\right).
\end{align}

Then it holds that
\begin{align*}
\| f^{\delta}_{\alpha, \beta}-f_{\alpha, \beta}\|_2
=&\|\sum ^{\infty}_{l=1} \frac{\lambda _l}{1-e^{-\lambda _lT}+\alpha \beta \lambda _l+\beta \lambda ^2_l}(g_{\delta}-g,X_l)X_l \|_2\\
\leq &\sup _{l\geq 1}\left(  \frac{\lambda _l}{1-e^{-\lambda _lT}+\alpha \beta \lambda _l+\beta \lambda ^2_l} \right) \|g_{\delta}-g\|_2\\
\leq &\sup _{l\geq 1}\left(  \frac{1}{\frac{\gamma _1}{\lambda _l}+\alpha \beta+\beta \lambda _l} \right) \delta
\leq  \frac{\delta}{2\sqrt{\gamma _1\beta}+\alpha \beta},
\end{align*}
where $\gamma _1 := 1-e^{-\lambda _1T}>0$. When $\alpha = \alpha _* = 1/\tau +\tau /\beta$, we have
\begin{align}
\label{noiserbd}
\| f^{\delta}_{\alpha, \beta}-f_{\alpha, \beta}\|_2 \leq \frac{\delta}{2\sqrt{\gamma _1\beta}+\beta /\tau +\tau}\leq \frac{\tau \delta}{\beta}.
\end{align}
Meanwhile, based on (\ref{exsol}) and (\ref{noisefrsol}) , the error between the noise-free regularized solution and the exact solution satisfies
\begin{align*}
 \|f_{\alpha, \beta}-f\|_2
\leq & \left\Vert \sum ^{\infty}\left (\frac{\lambda _l}{1-e^{-\lambda _lT}+\alpha \beta\lambda _l+\beta \lambda^2_l} -\frac{\lambda _l}{1-e^{-\lambda _lT}} \right)\left((g,X_l)-(\phi, X_l)e^{-\lambda _lT} \right)X_l \right\Vert _2 \\
=&\left( \sum ^{\infty}_{l=1} \left (\frac{\lambda _l}{1-e^{-\lambda _lT}+\alpha \beta\lambda _l+\beta \lambda^2_l} -\frac{\lambda _l}{1-e^{-\lambda _lT}} \right)^2\left((g,X_l)-(\phi, X_l)e^{-\lambda _lT} \right)^2 \right)^{\frac{1}{2}}\\
=&\left( \sum ^{\infty}_{l=1} \left(\frac{\alpha \beta\lambda _l +\beta \lambda ^2_l}{1-e^{-\lambda _lT}+\alpha \beta\lambda _l+\beta \lambda^2_l} \right)^2 \frac{1}{\lambda ^p_l}\frac{\lambda ^p_l\lambda ^2_l}{\left(1-e^{-\lambda _lT} \right)^2}\left((g,X_l)-(\phi, X_l)e^{-\lambda _lT} \right)^2 \right)^{\frac{1}{2}}\\
\leq& \left(\sup _{l\geq 1}A_l\right) \left( \sum ^{\infty}_{l=1} \lambda ^p_l (f,X_l)^2\right)^{\frac{1}{2}}
= \left(\sup _{l\geq 1}A_l \right)\|f\| _{H^p(\Omega)}
\leq  \left(\sup _{l\geq 1}A_l\right) E_f,
\end{align*}
where
\begin{align*}
A_l=&\frac{\alpha \beta\lambda _l +\beta \lambda ^2_l}{(1-e^{-\lambda _lT}+\alpha \beta\lambda _l+\beta \lambda^2_l)\lambda ^{\frac{p}{2}}_l}
=  \frac{\alpha \beta\lambda ^{1-\frac{p}{2}}_l +\beta \lambda ^{2-\frac{p}{2}}_l}{1-e^{-\lambda _lT}+\alpha \beta\lambda _l+\beta \lambda^2_l}
\leq  \frac{\alpha \beta \lambda ^{1-\frac{p}{2}}_l}{\gamma _1 +\alpha \beta \lambda _l} + \frac{\beta \lambda ^{2-\frac{p}{2}}_l}{\gamma _1 +\beta \lambda ^2_l}.
\end{align*}
According to Lemma 2.7 in \cite{Wei_2014b}, we can obtain
\begin{align*}
\frac{\alpha \beta \lambda ^{1-\frac{p}{2}}_l}{\gamma _1 +\alpha \beta \lambda _l} \leq
\begin{cases}
C_4(\alpha \beta) ^{\frac{p}{2}}, \quad &0<p<2,\\
C_5\alpha \beta, \quad &p\geq 2,
\end{cases}
\quad \text{ and } \quad
 \frac{\beta \lambda ^{2-\frac{p}{2}}_l}{\gamma _1 +\beta \lambda ^2_l}\leq
\begin{cases}
C_6\beta ^{\frac{p}{4}}, \quad &0<p<4,\\
C_7\beta, \quad &p\geq 4,
\end{cases}
\end{align*}
where $C_i$, $i=4,5,6,7$, are positive constants that only depend on $p$, $T$, and $\lambda _1$, which leads to
\begin{align}
\label{Albd}
A_l \leq
\begin{cases}
C_4(\alpha \beta) ^{\frac{p}{2}} + C_6\beta ^{\frac{p}{4}},  \quad &0<p<2, \\
C_5\alpha \beta + C_6\beta ^{\frac{p}{4}}, \quad &2\leq p<4,\\
C_5\alpha \beta + C_7\beta, \quad &p\geq 4.
\end{cases}
\end{align}
For $\alpha = \alpha _* = 1/\tau +\tau /\beta$,  combining (\ref{noiserbd}), (\ref{Albd}) and the fact  $\alpha \beta=\beta/\tau+\tau \geq 2\sqrt{\beta}$, we show the desired error estimates in the following three different cases depending on the range of $p$:\\

\underline{\textit{Case (i)}}: when $0<p<2$, we have (due to $(a+b)^p\le 2^p(a^p+b^p)$ for any $a>0,b>0, p>0$)
\begin{align*}
A_l\leq C_4(\alpha \beta) ^{\frac{p}{2}} + C_62^{-\frac{p}{2}}(\alpha \beta)^{\frac{p}{2}}
\leq \tilde{C}_1(\beta /\tau+\tau) ^{\frac{p}{2}} \leq \tilde{C}_12^{\frac{p}{2}}((\beta / \tau)^{\frac{p}{2}} +\tau ^{\frac{p}{2}}),
\end{align*}
then it holds that
\begin{align}
\|f^{\delta}_{\alpha, \beta} -f\|_2
\leq \frac{\tau \delta}{\beta} + \tilde{C}_12^{\frac{p}{2}}((\beta / \tau)^{\frac{p}{2}} +\tau ^{\frac{p}{2}} )  E_f
\leq \frac{\tau \delta}{\beta} +\tilde{C}_2 (\beta / \tau)^{\frac{p}{2}} E_f +O(\tau ^{\frac{p}{2}}),
\end{align}
which, upon choosing $\beta=\tau \left(\frac{\delta}{E_f}\right)^{\frac{2}{p+2}}$ such that $\frac{\tau \delta}{\beta}=(\beta / \tau)^{\frac{p}{2}} E_f$, gives   the desired error estimate as in (\ref{case1bd}) with $C_1=1+\tilde{C}_2$. Here $\tilde{C}_1, \tilde{C}_2$ are positive constants that only depend on  $p$, $T$, and $\lambda _1$.\\

\underline{\textit{Case (ii)}}: when $2\leq p<4$, we have
\begin{align}
A_l\leq C_5\alpha \beta + C_62^{-\frac{p}{2}}(\alpha \beta)^{\frac{p}{2}},
\end{align}
and therefore
\begin{align*}
\|f^{\delta}_{\alpha, \beta} -f\|_2
\leq &\frac{\tau \delta}{\beta} + \left(C_5(\beta /\tau +\tau) + C_62^{-\frac{p}{2}}(\beta /\tau +\tau)^{\frac{p}{2}}\right)E_f\\
\leq &\frac{\tau \delta}{\beta} + \left(C_5(\beta /\tau +\tau) + C_6 ((\beta / \tau)^{\frac{p}{2}} +\tau ^{\frac{p}{2}}) \right)E_f\\
\leq &\frac{\tau \delta}{\beta} +C_2 \max \{\beta /\tau , (\beta /\tau)^{\frac{p}{2}} \}E_f +O(\tau+\tau ^{\frac{p}{2}}),
\end{align*}
where $C_2>0$ is a constant. By taking $\beta = \tau \left(\frac{\delta}{E_f} \right)^{\frac{1}{2}}$ such that $\frac{\tau \delta}{\beta} =(\beta /\tau) E_f$, we have
\begin{align*}
\|f^{\delta}_{\alpha, \beta} -f\|_2
\leq & (E_f\delta)^{\frac{1}{2}} + C_2 \max \left\{\left(\frac{\delta}{E_f} \right)^{\frac{1}{2}}, \left(\frac{\delta}{E_f} \right)^{\frac{p}{4}} \right\}E_f+O(\tau)\\
\leq & (E_f\delta)^{\frac{1}{2}} \left(1+C_2(E_f\delta)^{\frac{1}{2}} \max \left\{1, \left(\frac{\delta}{E_f} \right)^{\frac{p-2}{4}} \right\}  \right) +O(\tau),
\end{align*}
which proves the estimate (\ref{case2bd}).

\underline{\textit{Case (iii)}}: when $p\geq4$, we have
\begin{align}
\|f^{\delta}_{\alpha, \beta} -f\|_2
\leq &\frac{\tau}{\beta}\delta +(C_5(\beta/\tau +\tau)+ C_7\beta)E_f
\leq  \frac{\tau}{\beta}\delta +\tilde{C}_4(1/\tau +1)\beta E_f +O(\tau),
\end{align}
where $\tilde{C}_4>0$ is a constant. The error estimate in (\ref{case3bd}) is achieved with $C_3=1+\tilde{C}_4$ if we choose $\beta =\frac{\tau}{\sqrt{\tau +1}}\left(\frac{\delta}{E_f}\right)^{\frac{1}{2}}$ such that $\frac{\tau}{\beta}\delta=(1/\tau +1)\beta E_f$.
\end{proof}

\begin{remark}
For $p>0$, Theorem \ref{thmcov} indicates that $\| f^{\delta}_{\alpha, \beta} -f\|_2 \rightarrow 0$ as $\delta \rightarrow 0$, and the convergence rate depends on the regularity of $f$ (i.e. $p>0$). In particular,  for $p\ge 4$, the obtained convergence rate $O(\delta^{\frac{1}{2}})$ is slightly slower than the derived convergence rate $O(\delta^{\frac{2}{3}})$ of MQBVM \cite{Wei_2014b}.
This is reasonable since our PQBVM uses a nonzero $\alpha f(\cdot)$ term to control the condition number of $V$.
\end{remark}

\begin{remark}
For $p=0$, we have $A_l\leq 1$, then only the boundedness of $\| f^{\delta}_{\alpha, \beta} -f\|_2$ can be ensured.
\end{remark}

 \section{Numerical examples} \label{secNum}
   In this section, we present some numerical examples to illustrate the computational efficiency of our proposed PQBVM method. All simulations are  implemented in serial with MATLAB on {a Dell Precision 5820 Workstation with Intel(R) Core(TM) i9-10900X CPU@3.70GHz CPU and 64GB RAM},
  where CPU times (in seconds) are estimated by the timing functions \texttt{tic/toc}.
  In QBVM, we directly solve the full sparse linear systems with
  MATLAB's backslash sparse direct solver, which runs very fast for several thousands (but not millions) of unknowns.
  Our proposed PQBVM (including MQBVM as a special case) will be solved by the 3-steps fast direct PinT solver (\ref{3step}), where the independent complex-shifted sparse linear systems in Step-(b) can be solved by fast direct solvers
  (Thomas' algorithm for 1D cases and FFT solver for 2D cases) for rectangular domains with regular grids. The diagonalization of $B=VDV^{-1}$ is computed with MATLAB's \texttt{eig} function
  and Step-(a) $ZV^{-\T}$ is done (with MATLAB code: \texttt{Z/(V.')}) by MATLAB's slash('/') direct solver.

  To avoid inverse crimes, for a given exact source $f$ we first solve the forward (direct) problem with Crank-Nicolson time-stepping scheme to compute $g$ and then generate the noisy final condition measurement by $g_{\delta}=g\times (1+\epsilon\times \rm{rand}(-1,1)),$
  where $\epsilon> 0$ controls the noise level
  and $\rm{rand}(-1,1)$ denotes random noise uniformly distributed within $[-1,1]$. We then further compute the estimated noise bound $\delta:=\|g^{\delta}-g\|_2$.
  In practice, the obtained noise bound $\delta$ may be over-estimated or under-estimated.
  Since $E_f$ in Theorem \ref{thmcov} is unknown, we select more practical regularization parameters $\beta=\delta^{1/2}$, $\beta=\delta$, $\beta=\tau\delta^{1/2}$ for QBVM, MQBVM($\alpha=0$), and our proposed PQBVM($\alpha=\alpha_*$), respectively.
  After solving the discretized full linear system, we   obtain the approximate source   $f_h$ and then compute its discrete $L_2(\Omega)$ norm error as
  $
  e_h=\|f_h-f(\cdot)\|_2.
  $
  For a fixed mesh, we would expect $e_h$ to decrease  as the noise level $\delta$ gets smaller, but the discretization errors also affect the overall accuracy, especially for our PQBVM (with $\beta=\tau\delta^{1/2}$).
  The convergence rate also depends on the regularity of $f$,
  where a smooth $f$ shows faster convergence rate than a non-smooth $f$.
  \subsection{1D and 2D examples}
  \textbf{Example 1.} Choose $\Omega=(0,\pi), T=1$, $\phi(x)=0$, and a smooth source function $$f(x)=x(\pi-x)\sin(4x).$$
  Table \ref{T1A} reports the error results and CPU times with three different regularization methods,  where the CPU times of both  MQBVM and PQBVM with the PinT direct solver are significantly faster than that of the QBVM based on MATLAB's backslash direct solver.
  With a very smooth $f$, the MQBVM in \cite{Wei_2014b} indeed shows slightly faster convergence rate than both QBVM and PQBVM.
  As shown in Figure \ref{FigEx1}, the QBVM displays undesirable artificial  oscillations for large noise levels, which was not visible in both  MQBVM and PQBVM, mainly due to the introduced Laplacian regularization term $\Delta f$ that smooths out the reconstructed approximation.

  \begin{table}[htp!]
  	\centering
  	\caption{{Error and CPU results for Ex. 1 with different mesh sizes and noise levels.}}
  	\begin{tabular}{|c|c||cccc||cccc|}\hline
  		& &\multicolumn{4}{|c|}{Errors in $L_2$ norm}&\multicolumn{4}{|c|}{CPU (in seconds)} \\
  		\hline
  		Method	&$(m,n)$$\backslash$   $\epsilon$&	 $10^{-1}$&	$10^{-2}$&$10^{-3}$&$10^{-4}$  &	 $10^{-1}$&	$10^{-2}$&$10^{-3}$&$10^{-4}$      \\ \hline

  		\multirow{3}{*}{\parbox{2cm}{QBVM\\($\beta=\delta^{1/2}$)}}
  	&(256, 256)	 &1.43e+00 	 &8.08e-01 	 &3.43e-01 	 &1.31e-01 	   &0.5 	 &0.5 	 &0.5 	 &0.5 	\\
  	&(512, 512)	 &1.41e+00 	 &8.09e-01 	 &3.57e-01 	 &1.26e-01 	   &2.6 	 &2.5 	 &2.6 	 &2.7 	\\
  	&(1024,1024)	 &1.42e+00 	 &7.97e-01 	 &3.56e-01 	 &1.28e-01 &18.6 	 &18.7 	 &18.5 	 &18.6 	\\
  		\hline \hline
  		\multirow{3}{*}{\parbox{2cm}{MQBVM\\($\beta=\delta$)}}
  &(256, 256)	 &1.66e+00 	 &6.16e-01 	 &1.12e-01 	 &1.78e-02 	   &0.1 	 &0.1 	 &0.1 	 &0.1 	\\
  &(512, 512)	 &1.66e+00 	 &6.22e-01 	 &1.16e-01 	 &1.75e-02 	   &0.3 	 &0.3 	 &0.3 	 &0.3 	\\
  &(1024,1024)	 &1.65e+00 	 &6.03e-01 	 &1.11e-01 	 &1.76e-02 &1.3 	 &1.3 	 &1.2 	 &1.2 	\\	
  		\hline \hline
  		\multirow{3}{*}{\parbox{2cm}{PQBVM\\($\beta=\tau\delta^{1/2}$)}}
 &(256, 256)	 &1.48e+00 	 &8.94e-01 	 &4.70e-01 	 &2.67e-01      &0.1 	 &0.1 	 &0.1 	 &0.1 	\\
 &(512, 512)	 &1.44e+00 	 &8.54e-01 	 &4.08e-01 	 &2.00e-01 	    &0.3 	 &0.3 	 &0.3 	 &0.3 	\\
 &(1024,1024)	 &1.42e+00 	 &8.32e-01 	 &3.83e-01 	 &1.65e-01  &1.2 	 &1.2 	 &1.2 	 &1.2 	\\  \hline
  	\end{tabular}
  	\label{T1A}
  \end{table}
 \begin{figure}[htp!]
 	\begin{center}
 		\includegraphics[width=1\textwidth]{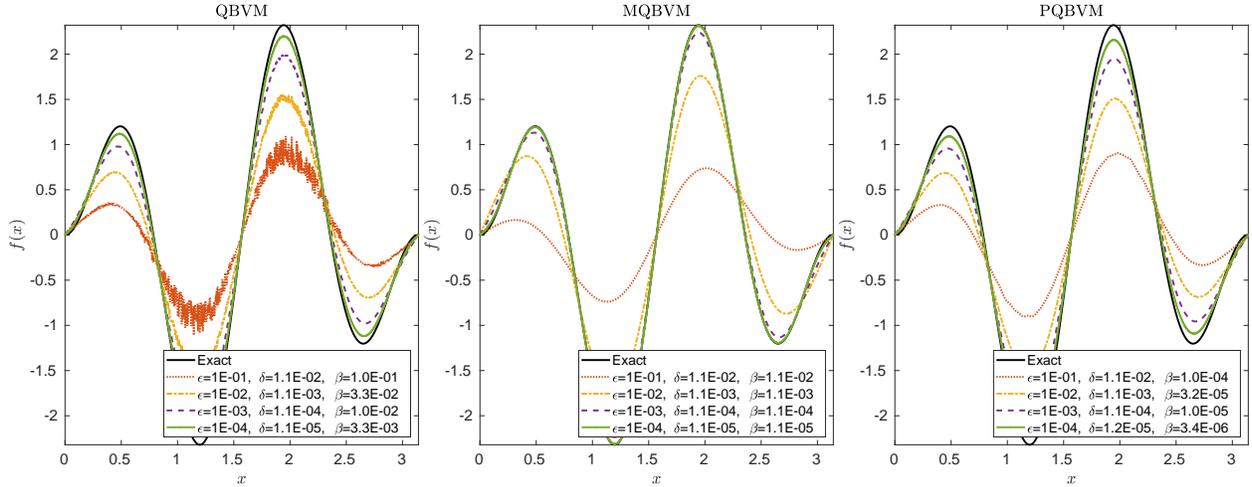}
 	\end{center}
 	\vspace{-2em}
 	\caption{Reconstructed $f(x)$ in Ex. 1 with different methods and noise levels $\epsilon\in\{10^{-1},10^{-2},10^{-3},10^{-4}\}$ (using the mesh $h=\pi/1024,\tau=T/1024$, $\alpha=\delta^{1/2}$ for QBVM, $\alpha=\delta$ for MQBVM,  and $\alpha=\tau\delta^{1/2}$ for PQBVM). The black solid curve is the exact solution.}
 	
 	\label{FigEx1}
 \end{figure}

 \textbf{Example 2.} Choose $\Omega=(0,\pi), T=1$, $\phi(x)=0$, and a non-smooth source function
 \[f(x) =\begin{cases}
 	2x, & 0\le x\le \pi/2, \\
 	2(\pi-x), & \pi/2\le x\le \pi, \\
 \end{cases}
 \]
  Table \ref{T2A} reports the error results and CPU times with three different regularization methods,  where again the CPU times of both  MQBVM and PQBVM based on our PinT direct solver are much faster
  than that of QBVM.   Figure \ref{FigEx2} illustrates the reconstructed $f(x)$ with different noise levels, where the MQBVM shows only slightly better accuracy with a non-differentiable $f$.

 \begin{table}[htp!]
 	\centering
 	\caption{{Error and CPU results for Ex. 2 with different mesh sizes and noise levels.}}
 	\begin{tabular}{|c|c||cccc||cccc|}\hline
 		& &\multicolumn{4}{|c|}{Errors in $L_2$ norm}&\multicolumn{4}{|c|}{CPU (in seconds)} \\
 		\hline
 		Method	&$(m,n)$$\backslash$   $\epsilon$&	 $10^{-1}$&	$10^{-2}$&$10^{-3}$&$10^{-4}$  &	 $10^{-1}$&	$10^{-2}$&$10^{-3}$&$10^{-4}$      \\ \hline

 		\multirow{3}{*}{\parbox{2cm}{QBVM\\($\beta=\delta^{1/2}$)}}
 	&(256, 256)	 &1.21e+00 	 &5.17e-01 	 &2.05e-01 	 &7.62e-02 	   &0.5 	 &0.5 	 &0.5 	 &0.5 	\\
 	&(512, 512)	 &1.19e+00 	 &5.18e-01 	 &2.06e-01 	 &7.86e-02 	   &2.7 	 &2.6 	 &2.6 	 &2.6 	\\
 	&(1024,1024)	 &1.21e+00 	 &5.26e-01 	 &2.04e-01 	 &7.93e-02 &18.4 	 &18.8 	 &18.4 	 &18.8 	\\	
 		\hline \hline
 		\multirow{3}{*}{\parbox{2cm}{MQBVM\\($\beta=\delta$)}}
 &(256, 256)	 &5.96e-01 	 &2.31e-01 	 &9.53e-02 	 &4.05e-02 	    &0.1 	 &0.1 	 &0.1 	 &0.1 	\\
 &(512, 512)	 &5.98e-01 	 &2.35e-01 	 &9.52e-02 	 &3.97e-02 	    &0.3 	 &0.3 	 &0.3 	 &0.3 	\\
 &(1024,1024)	 &6.21e-01 	 &2.34e-01 	 &9.53e-02 	 &3.97e-02  &1.3 	 &1.3 	 &1.3 	 &1.3 	\\	
 		\hline \hline
 		\multirow{3}{*}{\parbox{2cm}{PQBVM\\($\beta=\tau\delta^{1/2}$)}}
&(256, 256)	 &1.17e+00 	 &5.26e-01 	 &2.15e-01 	 &1.01e-01 	   &0.1 	 &0.1 	 &0.1 	 &0.1 	\\
&(512, 512)	 &1.16e+00 	 &5.16e-01 	 &2.15e-01 	 &8.95e-02 	   &0.3 	 &0.3 	 &0.3 	 &0.3 	\\
&(1024,1024)	 &1.16e+00 	 &5.11e-01 	 &2.08e-01 	 &8.50e-02 &1.3 	 &1.2 	 &1.2 	 &1.2 	\\  \hline
 	\end{tabular}
 	\label{T2A}
 \end{table}
\begin{figure}[htp!]
	\begin{center}
		\includegraphics[width=1\textwidth]{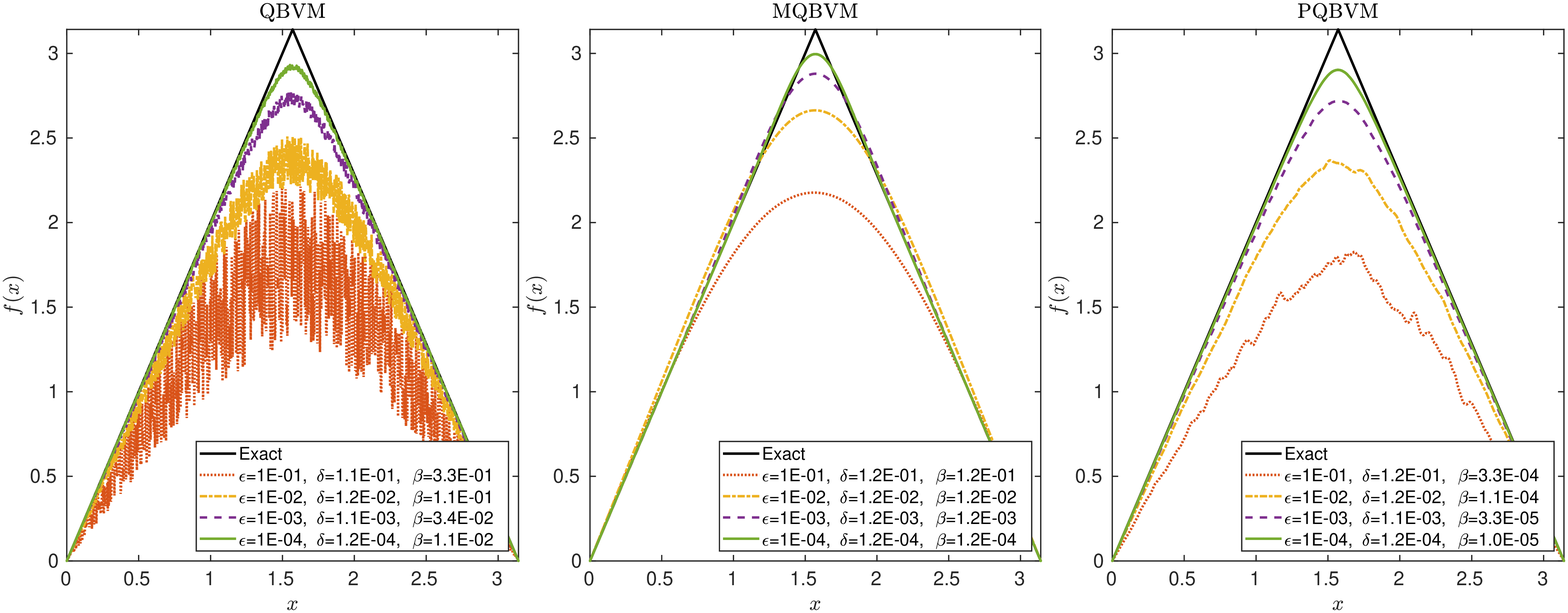}
	\end{center}
	\vspace{-2em}
	\caption{Reconstructed $f(x)$ in Ex. 2 with different methods and noise levels $\epsilon\in\{10^{-1},10^{-2},10^{-3},10^{-4}\}$ (using the mesh $h=\pi/1024,\tau=T/1024$, $\alpha=\delta^{1/2}$ for QBVM, $\alpha=\delta$ for MQBVM,  and $\alpha=\tau\delta^{1/2}$ for PQBVM). The black solid curve is the exact solution.}
	
	\label{FigEx2}
\end{figure}

\begin{table}[htp!]
	\centering
	\caption{{Error and CPU results for Ex. 3 with different mesh sizes and noise levels.}}
	\begin{tabular}{|c|c||cccc||cccc|}\hline
		& &\multicolumn{4}{|c|}{Errors in $L_2$ norm}&\multicolumn{4}{|c|}{CPU (in seconds)} \\
		\hline
		Method	&$(m,n)$$\backslash$   $\epsilon$&	 $10^{-1}$&	$10^{-2}$&$10^{-3}$&$10^{-4}$  &	 $10^{-1}$&	$10^{-2}$&$10^{-3}$&$10^{-4}$      \\ \hline

		\multirow{3}{*}{\parbox{2cm}{QBVM\\($\beta=\delta^{1/2}$)}}
		&(256, 256)	 &5.04e-01 	 &3.53e-01 	 &2.55e-01 	 &1.91e-01 	   &0.5 	 &0.5 	 &0.5 	 &0.5 	\\
		&(512, 512)	 &5.21e-01 	 &3.58e-01 	 &2.58e-01 	 &1.91e-01 	   &2.6 	 &2.5 	 &2.6 	 &2.5 	\\
		&(1024,1024)	 &5.25e-01 	 &3.57e-01 	 &2.57e-01 	 &1.92e-01 &18.4 	 &18.2 	 &18.3 	 &18.4 	\\	
		\hline \hline
		\multirow{3}{*}{\parbox{2cm}{MQBVM\\($\beta=\delta$)}}
		&(256, 256)	 &5.22e-01 	 &3.42e-01 	 &2.64e-01 	 &1.97e-01 	   &0.1 	 &0.1 	 &0.1 	 &0.1 	\\
		&(512, 512)	 &5.18e-01 	 &3.40e-01 	 &2.65e-01 	 &1.97e-01 	   &0.3 	 &0.3 	 &0.3 	 &0.3 	\\
		&(1024,1024)	 &5.16e-01 	 &3.42e-01 	 &2.63e-01 	 &1.97e-01 &1.3 	 &1.3 	 &1.3 	 &1.3 	\\ 	
		\hline \hline
		\multirow{3}{*}{\parbox{2cm}{PQBVM\\($\beta=\tau\delta^{1/2}$)}}
		&(256, 256)	 &5.15e-01 	 &3.68e-01 	 &2.84e-01 	 &2.34e-01 	   &0.1 	 &0.1 	 &0.1 	 &0.1 	\\
		&(512, 512)	 &4.97e-01 	 &3.61e-01 	 &2.73e-01 	 &2.19e-01 	   &0.3 	 &0.3 	 &0.3 	 &0.3 	\\
		&(1024,1024)	 &4.97e-01 	 &3.53e-01 	 &2.65e-01 	 &2.09e-01 &1.2 	 &1.2 	 &1.3 	 &1.2 	\\	  \hline
	\end{tabular}
	\label{T3A}
\end{table}

\begin{figure}[htp!]
	\begin{center}
		\includegraphics[width=1\textwidth]{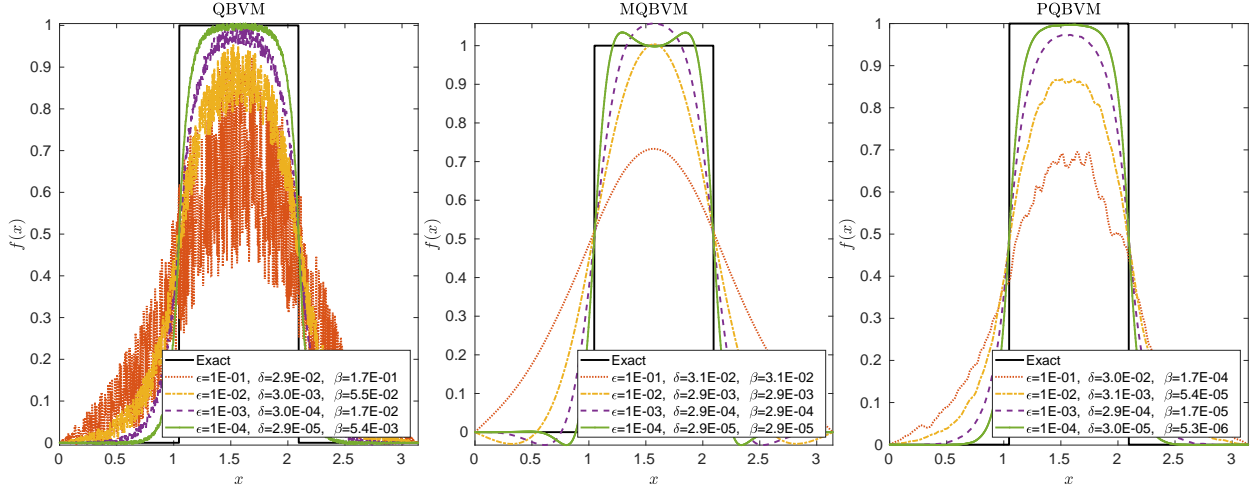}
	\end{center}
	\vspace{-2em}
	\caption{Reconstructed $f(x)$ in Ex. 3 with different methods and noise levels $\epsilon\in\{10^{-1},10^{-2},10^{-3},10^{-4}\}$ (using the mesh $h=\pi/1024,\tau=T/1024$, $\alpha=\delta^{1/2}$ for QBVM, $\alpha=\delta$ for MQBVM,  and $\alpha=\tau\delta^{1/2}$ for PQBVM). The black solid curve is the exact solution.}
	
	\label{FigEx3}
\end{figure}

 \textbf{Example 3.} Choose $\Omega=(0,\pi), T=1$, $\phi(x)=0$, and a discontinuous source function
 \[f(x) =\begin{cases}
 	1, & \pi/3\le x\le 2\pi/3, \\
 	0, & \mathrm{else}, \\
 \end{cases}
 \]
  Table \ref{T3A} reports the error results and CPU times with three different regularization methods,  where the errors of all three methods are comparable but  the CPU times of both  MQBVM and PQBVM based on our PinT direct solver are much faster.   Figure \ref{FigEx3} illustrates the reconstructed $f(x)$ with different noise levels, where the MQBVM shows more clear Gibbs phenomenon due to discontinuity and the PQBVM seems to provide most stable approximation in the sense of avoiding oscillations and overshooting near the discontinuities.

 \begin{table}[htp!]
 	\centering
 	\caption{{Error and CPU results for 2D Ex. 4 with different mesh sizes and noise levels.}}
 	\begin{tabular}{|c|c||ccc||ccc|}\hline
 		& &\multicolumn{3}{|c|}{Errors in $L_2$ norm}&\multicolumn{3}{|c|}{CPU (in seconds)} \\
 		\hline
 		Method	&$(m,n)$$\backslash$   $\epsilon$&	 $10^{-1}$&	 $10^{-2}$&$10^{-3}$  &	 $10^{-1}$&	$10^{-2}$& $10^{-3}$      \\ \hline

 		\multirow{3}{*}{\parbox{2cm}{QBVM\\($\beta=\delta^{1/2}$)}}
 		&($ 32^2$,  32)	 &2.21e+00 	 &1.18e+00 	 &4.98e-01 	 &2.00 	 &2.09 	 &2.08 	\\
 		&($ 64^2$,  64)	 &2.21e+00 	 &1.18e+00 	 &4.93e-01 	 &132.01 	 &127.05 	 &132.84 	\\
 		&($ 128^2$,  128)	 & --&--&--&--&--&--\\
 		\hline \hline
 		\multirow{3}{*}{\parbox{2cm}{MQBVM\\($\beta=\delta$)}}
 		&($ 32^2$,  32)	 &3.26e+00 	 &2.20e+00 	 &9.83e-01 	&0.01 	 &0.01 	 &0.01 	\\
 		&($ 64^2$,  64)	 &3.26e+00 	 &2.21e+00 	 &9.82e-01 	&0.04 	 &0.04 	 &0.05 	\\
 		&($128^2$, 128)	 &3.26e+00 	 &2.20e+00 	 &9.85e-01 	&0.28 	 &0.29 	 &0.28 	\\
 		&($256^2$, 256)	 &3.26e+00 	 &2.20e+00 	 &9.84e-01 	&2.83 	 &2.90 	 &2.91 	\\
 		&($512^2$, 512)	 &3.26e+00 	 &2.20e+00 	 &9.84e-01 	&33.68 	 &33.80 	 &33.56 	\\
 		\hline \hline
 		\multirow{3}{*}{\parbox{2cm}{PQBVM\\($\beta=\tau\delta^{1/2}$)}}
 		&($ 32^2$,  32)	 &2.53e+00 	 &1.72e+00 	 &1.19e+00 	&0.01 	 &0.01 	 &0.01 	\\
 		&($ 64^2$,  64)	 &2.39e+00 	 &1.49e+00 	 &8.92e-01 	&0.04 	 &0.04 	 &0.04 	\\
 		&($128^2$, 128)	 &2.31e+00 	 &1.35e+00 	 &7.05e-01 	&0.29 	 &0.28 	 &0.28 	\\
 		&($256^2$, 256)	 &2.26e+00 	 &1.27e+00 	 &6.04e-01 	&3.41 	 &2.92 	 &3.44 	\\
 		&($512^2$, 512)	 &2.23e+00 	 &1.23e+00 	 &5.49e-01 	&34.42 	 &33.96 	 &33.72 	\\
 		\hline
 	\end{tabular}
 	\label{T4A}
 \end{table}

 \begin{figure}[htp!]
 	\begin{center}
 		\includegraphics[width=1\textwidth]{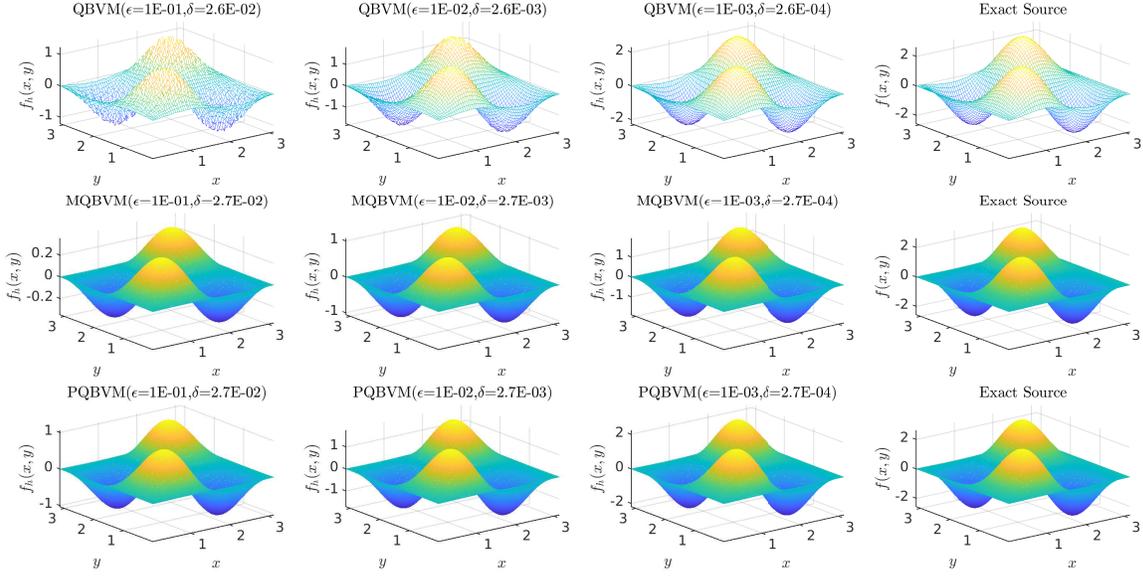}
 	\end{center}
 	\vspace{-2em}
 	\caption{Reconstructed $f(x)$ in Ex. 4 with different methods and noise levels $\epsilon\in\{10^{-1},10^{-2},10^{-3}\}$ (using the mesh $h=\pi/512,\tau=T/512$, $\alpha=\delta^{1/2}$ for QBVM, $\alpha=\delta$ for MQBVM,  and $\alpha=\tau\delta^{1/2}$ for PQBVM).}
 	
 	\label{FigEx4}
 \end{figure}

   \textbf{Example 4.} Choose $\Omega=(0,\pi)^2, T=1$, $\phi(x,y)=0$, and a smooth source function $$f(x,y)=x(\pi-x)\sin(2x)y(\pi-y)\cos(y).$$
   Table \ref{T4A} reports the error results and CPU times with three different regularization methods,  where the CPU times of both  MQBVM and PQBVM based on our PinT direct solver are much faster
   although the errors of QBVM are slightly smaller than MQBVM and PQBVM.  Notice that even for a small mesh size $(m,n)=(64^2,64)$  the CPU times are decreased from over 2 mins to about $0.04$ second, let alone a larger mesh size (such as $(m,n)=(128^2,128)$ with about 2.1 million unknowns). Here we used ``--'' to indicate the computation takes an excessively long time for MATLAB's backslash sparse direct solver.
    Figure \ref{FigEx4} illustrates the reconstructed $f(x)$ with different noise levels, where the differences between three methods are not clearly visible.
   This example demonstrates the superior computational efficiency of our proposed PinT direct solver in treating more practical 2D/3D problems that are costly to solve by the sparse direct solver.

  \subsection{Application to separable space and time-dependent source term}
  Consider the following   model \cite{Ke2020} with a given positive time-dependent source term $q(t)>0$:
   \eq \label{state_qbvm_new_t}
  \left\{\begin{array}{ll}
  	u_t -\Delta u =f(x)q(t),\ \qquad &\tn{in} \Omega\times (0,T),  \\
  	u(\cdot,t)=0, &\tn{on} \partial\Omega\times (0,T), \\
  	u(\cdot,0)=\phi , &\tn{in} \Omega, \\
  	u(\cdot,T)+{\beta(\alpha f(\cdot)-\Delta f(\cdot)})=g_\delta , &\tn{in} \Omega.
  \end{array}\right.
  \ee
  With the same finite-difference discretization, we get a  linear system of Kronecker product form
  \begin{align} \label{Akron}
  	A_h= B_q \otimes I_h - I_t\otimes \Delta_h
  \end{align}
  where the time discretization matrix $B_q$ is given by (let $q_j=q(t_j)$)
  \begin{align}
  	B_q &=\bmt
  	\alpha& 0&0 &\cdots &0 &1/\beta\\
  	-q_1 &  1/\tau &0 & \cdots &0 &0\\
  	-q_2&-1/\tau &  1/\tau & 0 & \cdots &0\\
  	\vdots&0&\ddots &\ddots &\ddots &0\\
  	-q_{n-1}& 0&\cdots &-1/\tau& 1/\tau& 0\\
  	-q_n&0&\cdots & 0 & -1/\tau &1/\tau
  	\emt \in\IR^{(n+1)\times (n+1)}.
  \end{align}	
Hence, our proposed direct PinT solver can still be applied if assuming $B_q=V_qD_q V_q^{-1}$ is diagonalizable and $V_q$ is somewhat well-conditioned. In this case, the diagonalizability of $B_q$ and   the estimate of $\kappa(V_q)$ are much more complicated to discuss as we did for the $B$ with $q(t)\equiv 1$, which will be left as future work.
The following example shows numerically it indeed works very well.

   \textbf{Example 5.} Choose $\Omega=(0,\pi), T=1$, $\phi(x)=0$, and the smooth source functions $$f(x)=x(\pi-x)\sin(4x), \qquad g(t)=e^{-t}+\ln(t+1)+t^2.$$
  Table \ref{T5A} reports the error results and CPU times with three different regularization methods as before and  Figure \ref{FigEx5} compares the reconstructed $f$,
  where similar conclusions can be made as in the previous Example 1.  The extra non-constant $q(t)$ term does not seems to affect the effectiveness of our proposed method, although our current analysis does not fully support this case yet.
  \begin{table}[htp!]
  	\centering
  	\caption{{Error and CPU results for Ex. 5 with different mesh sizes and noise levels.}}
  	\begin{tabular}{|c|c||cccc||cccc|}\hline
  		& &\multicolumn{4}{|c|}{Errors in $L_2$ norm}&\multicolumn{4}{|c|}{CPU (in seconds)} \\
  		\hline
  		Method	&$(m,n)$$\backslash$   $\epsilon$&	 $10^{-1}$&	$10^{-2}$&$10^{-3}$&$10^{-4}$  &	 $10^{-1}$&	$10^{-2}$&$10^{-3}$&$10^{-4}$      \\ \hline
  	\multirow{3}{*}{\parbox{2cm}{QBVM\\($\beta=\delta^{1/2}$)}}
 	&(256, 256)	   &1.23e+00 	 &6.27e-01 	 &2.60e-01 	 &8.50e-02 	     &0.5 	 &0.5 	 &0.5 	 &0.5 	\\
&(512, 512)	   &1.24e+00 	 &6.32e-01 	 &2.58e-01 	 &9.03e-02 	     &2.5 	 &2.5 	 &2.5 	 &2.5 	\\
&(1024,1024)   	 &1.23e+00 	 &6.39e-01 	 &2.54e-01 	 &9.13e-02  	 &18.3 	 &18.1 	 &18.0 	 &18.2 	\\
  	\hline \hline
  	\multirow{3}{*}{\parbox{2cm}{MQBVM\\($\beta=\delta$)}}
  	&(256, 256)	  &1.61e+00 	 &5.29e-01 	 &1.07e-01 	 &1.44e-02 	    &0.1 	 &0.1 	 &0.1 	 &0.1 	\\
  	&(512, 512)	  &1.62e+00 	 &5.73e-01 	 &1.02e-01 	 &1.56e-02 	    &0.3 	 &0.3 	 &0.3 	 &0.3 	\\
  	&(1024,1024)  	 &1.62e+00 	 &5.73e-01 	 &1.03e-01 	 &1.58e-02 	&1.3 	 &1.3 	 &1.3 	 &1.3 	\\
  	\hline \hline
  	\multirow{3}{*}{\parbox{2cm}{PQBVM\\($\beta=\tau\delta^{1/2}$)}}
  	&(256, 256)	  &1.28e+00 	 &6.94e-01 	 &3.27e-01 	 &1.61e-01 	    &0.1 	 &0.1 	 &0.1 	 &0.1 	\\
  	&(512, 512)	  &1.25e+00 	 &6.62e-01 	 &2.92e-01 	 &1.27e-01 	    &0.3 	 &0.3 	 &0.3 	 &0.3 	\\
  	&(1024,1024)  	 &1.25e+00 	 &6.53e-01 	 &2.79e-01 	 &1.10e-01 	&1.3 	 &1.2 	 &1.4 	 &1.3 	\\
  	\hline
  	\end{tabular}
  	\label{T5A}
  \end{table}
  \begin{figure}[htp!]
  	\begin{center}
  		\includegraphics[width=1\textwidth]{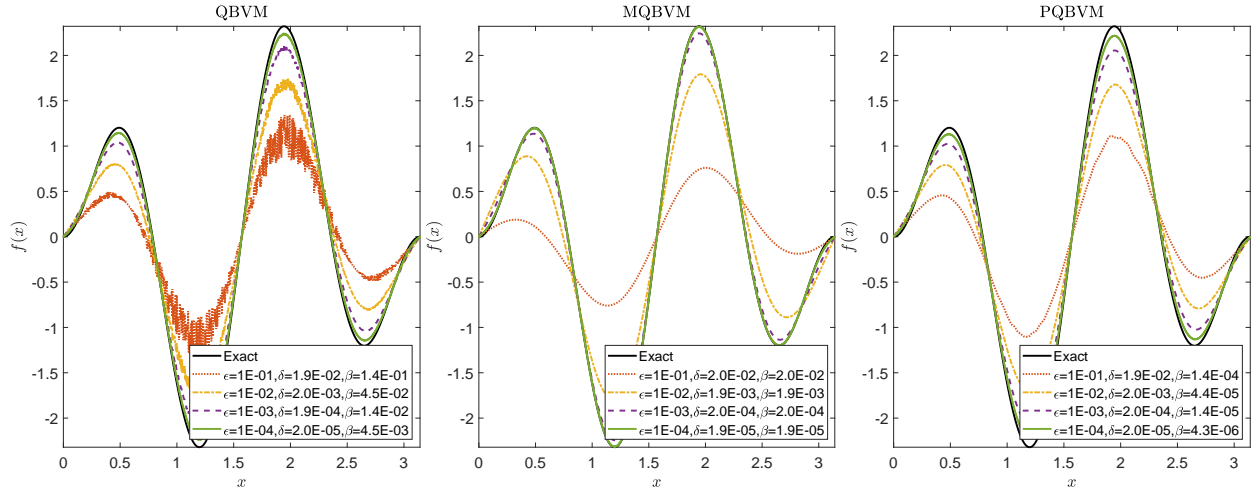}
  	\end{center}
  	\vspace{-2em}
  	\caption{Reconstructed $f(x)$ in Ex. 5 with different methods and noise levels $\epsilon\in\{10^{-1},10^{-2},10^{-3},10^{-4}\}$ (using the mesh $h=\pi/1024,\tau=T/1024$, $\alpha=\delta^{1/2}$ for QBVM, $\alpha=\delta$ for MQBVM,  and $\alpha=\tau\delta^{1/2}$ for PQBVM). The black solid curve is the exact solution.}
  	\label{FigEx5}
  \end{figure}
\section{Conclusions}  \label{secFinal}
  Inverse source problems are ill-posed and effective regularization is required for their stable numerical computation.
 The quasi-boundary value method and its variants are often used for regularizing such problems,
 which lead to large-scale ill-conditioned nonsymmetric sparse linear systems upon suitable space-time finite difference discretization.
 Such nonsymmetric all-at-once linear systems are costly to solve by either direct or iterative methods.
 In this paper we propose to modify the existing quasi-boundary value methods such that the full discretized system matrix admits a block Kronecker sum structure that can be solved by a fast diagonalization-based PinT direct solver.
 To control the roundoff errors of such a PinT direct solver,  we carefully estimate the condition number of the eigenvector matrix of the time discretization matrix, where the free parameter $\alpha=\alpha_*$ is determined for this purpose.
 Convergence analysis (with a priori choice of regularization parameter $\beta$) for our proposed parameterized quasi-boundary value method (PQBVM) is given under the special choice of $\alpha=\alpha_*$.
 Both 1D and 2D examples show our proposed PinT methods  can achieve a comparable accuracy with significantly faster CPU times.
 It is interesting to generalize our idea of integrating regularization and fast solvers to other related inverse PDE problems, such as to simultaneously recover the source term and initial value \cite{johansson2008procedure,wang2014regularized,Zheng2014,wang2019simultaneous,Wang2020}.

{\small
\bibliographystyle{siam}
\bibliography{DirectPinT,inversePDE,waveControl,ISPpint}
}

\end{document}